\documentclass[11pt]{amsart}

\usepackage{xspace}

\usepackage{amsmath}
\usepackage{amsthm}
\usepackage{amssymb}

 \usepackage{xcolor}

 \usepackage{url}
 \usepackage{graphicx} 

\allowdisplaybreaks 

\newtheorem{theorem}{Theorem}[section]
\newtheorem{prop}[theorem]{Proposition}
\newtheorem{lemma}[theorem]{Lemma}
\newtheorem{cor}[theorem]{Corollary}

\theoremstyle{definition}
\newtheorem{defi}[theorem]{Definition}
\newtheorem{example}[theorem]{Example}

\theoremstyle{remark}
\newtheorem{remark}{Remark}


\newcommand{\K}{\mathcal{K}}
\newcommand{\cat}{\mathop{\mathrm{cat}}}
\newcommand{\gcat}{\mathop{\mathrm{gcat}}}
\newcommand{\Cat}{\mathop{\mathrm{Cat}}}

\newcommand{\sd}{\mathop{\mathrm{sd}}}

\newcommand{\ctg}{\sim_c}   
\newcommand{\ctgcl}{\sim}   

\newcommand{\id}{\mathop{\mathrm{id}}}

\newcommand{\scat}{\mathop{\mathrm{scat}}}
\newcommand{\gscat}{\mathop{\mathrm{gscat}}}
\newcommand{\op}{^\mathrm{op}}

\newcommand{\seco}{\searrow\!\!\!\searrow }     

\newcommand{\dcat}{\mathop{\mathrm{dcat}}}


\newcommand{\thmref}[1]{Theorem~\ref{#1}}
\newcommand{\propref}[1]{Proposition~\ref{#1}}
\newcommand{\lemref}[1]{Lemma~\ref{#1}}
\newcommand{\corref}[1]{Corollary~\ref{#1}}

\newcommand{\examref}[1]{Example~\ref{#1}}
\newcommand{\defiref}[1]{Definition~\ref{#1}}

\newcommand{\figref}[1]{Figure~\ref{#1}}


\begin{document}
\title[LS-category of simplicial complexes and finite spaces]{Lusternik-Schnirelmann category of \\simplicial complexes and finite spaces}

\author{D.~Fern\'andez-Ternero \and E.~Mac\'{\i}as-Virg\'os \and J.A.~Vilches}

\thanks{The first and the third authors are partially supported by PAIDI Research Groups FQM-326 and FQM-189. The second  author was partially supported by MINECO Spain Research Project MTM2013-41768-P and FEDER}

\date{\today}

\begin{abstract}
In this paper we establish a natural definition of
Lus\-ter\-nik-Sch\-nirel\-mann category for simplicial complexes
via the well known  notion of contiguity. This category has the
property of being homotopy invariant under strong equivalences,
and {it} only depends on the simplicial structure rather than
its geometric realization.

In a similar way to the classical case, we also develop a notion of
geometric category for simplicial complexes. We prove that the maximum value over the homotopy class of a given complex is attained in the core of the complex.

Finally, by means of well known relations between simplicial
complexes and posets, specific new results for the topological
notion of {LS-}category are obtained in the setting of finite
topological spaces.
\end{abstract}

\keywords{{simplicial complex, contiguity class, strong collapse, Lusternik-Schnirelmann category,  finite topological space, poset}}

\subjclass[2010]{%
55U10,     
55M30,      
06F30}    

\maketitle

\tableofcontents

\section{Introduction}
Lusternik-Schnirelmann category was originally introduced as a
tool for variational problems on manifolds. Nowadays it has been
reformulated as a numerical invariant of topological spaces and
has become an important notion in homotopy theory  and  many other
areas \cite{CORNEA}, as well as in applications like topological
robotics \cite{FARBERGHRIST}. Many papers have appeared on this
topic  and the original definition has been generalized in a
number of different ways. For simplicial complexes and simplicial
maps, the notion of {\em contiguity} is considered as the discrete
version of homotopy. However, although these notions are classical
ones, the corresponding theory of LS-category is missing in the
literature. This paper can be considered as a first step in this
direction.

Still more important, finite simplicial complexes play a
fundamental role in the so-called theory of poset topology, which
connects combinatorics to many other branches of {M}athematics
\cite{KOPPERMAN,WACHS}. Being more precise, such theory allows us
to establish relations between simplicial complexes and finite
topological spaces. On the one hand, finite $T_0$-spaces and
finite partially ordered sets are equivalent categories (notice
that any finite space is homotopically equivalent to a
$T_0$-space). On the other hand, given a finite topological space
$X$ there exists the associated simplicial complex $\K(X)$, where
the simplices are its non-empty chains; and, conversely, given a
finite simplicial complex $K$ there is a finite space $\chi(K)$,
the poset of simplices of $K$, such that $\K(\chi(K))=\sd K$,  the
first barycentric subdivision of $K$. By these constructions we
can see posets and simplicial complexes as essentially equivalent
objects.

In this work we introduce a natural notion of LS-category $\scat
K$ for {any} simplicial {complex $K$}. Unlike other
topological notions established for the geometric realization of
the complex, our approach is directly based on the simplicial
structure. In this context, {\em contiguity classes} are the
combinatorial analogues of homotopy classes.  For instance,
different simplicial approximations to the same continuous map are
contiguous {and the} geometric realizations of contiguous maps
are homotopic.

{Analogously} to the {topological} setting, it is desirable
that this notion of category be a homotopy invariant. In order to
obtain this goal, the notion of strong collapse introduced by
Minian and Barmak \cite{BARMAK} is used instead of the classical
notion of collapse. The existence of {\em cores} or {\em minimal}
complexes is a fundamental difference between strong homotopy
types and simple homotopy types. A simplicial complex can collapse
to non-isomorphic subcomplexes.   However if a complex $K$
strongly collapses to a  minimal complex $K_0$, it must be unique,
up to isomorphism. We prove the homotopical invariance of
simplicial category, and, in particular, that   $\scat K= \scat
K_0$.

In addition, a notion of {\em geometric category} $\gscat K$ is
introduced in the simplicial context. For topological spaces
geometric category is not a homotopical invariant, so  it is
customary to consider  the minimum value of  $\gcat Y$, for all
spaces $Y$ of the same homotopy type as $X$. This process leads to
a homotopical invariant, $\Cat X$, first introduced by Ganea
\cite{CORNEA}. In the simplicial context we prove {several}
results about the behaviour of $\gscat K$ under strong collapses.
Other authors \cite{SCOVILLE} have considered a notion of
geometric category for simple collapses. The essential difference
is that for $\gcat$ one can consider not only the minimum value in
the homotopy class, but also the maximum, which coincides with the
category of the {\em core} of the complex.

By means of the equivalence between simplicial complexes and
finite topological spaces, we get a notion of LS-category of
finite spaces which corresponds with the classical notion, because
the concept of strong homotopy equivalence in the simplicial
context corresponds to the notion of homotopy equivalence in the
setting of finite spaces. Under this point of view new results are
obtained which do not have analogues in the continuous case.


The paper is organized as follows. We start by introducing in
Section~\ref{PRELIM} the basic notions and results concerning the
{link} between simplicial complexes and finite topological spaces,
as well as the definition of classical LS-category. Section 3 is
focused on the study of the simplicial LS-category $\scat K$ of a
simplicial complex $K$. We prove that this notion is a homotopy
invariant, that is, two strongly equivalent complexes have the
same category. The corresponding notion of geometrical category
$\gcat K$ for a  simplicial complex $K$ is studied in Section 4.
We obtain that the geometrical category increases under strong
collapses, and that the maximum value is obtained for the core
$K_0$ of the complex.  Section 5 contains a study on the
LS-category of finite topological spaces. Notice that it is not
the LS-category of the geometric realization $\vert \K(X)\vert$ of
the associated simplicial complex, but it is the category of the
topological space $X$ itself. We have not found any specific study
of LS-category for finite topological spaces {in the literature}.
For instance, we prove that the number of maximal elements minus
one is an upper bound of the category of a finite topological
space. By analogy with the LS-category of simplicial complexes we
establish other results for finite spaces. For instance, we prove
that geometrical category increases when a beat point is erased.
In particular, we exhibit a new example showing that geometrical
category is not a homotopy invariant. This example was
communicated to the authors by J. Barmak and G. Minian. Finally,
in Section 6 we prove that both the category and the geometrical
category decrease when applying the functors $\K$ and $\chi$.

\section{Preliminaries}\label{PRELIM}
\subsection{Simplicial complexes}\label{PRELIMSC}
We recall the notions of contiguity and strong collapse. Let $K,L$
be two simplicial complexes.  Two simplicial maps \linebreak
$\varphi,\psi\colon K\to L$ are {\em contiguous}
\cite[p.~130]{SPANIER} if, for any simplex $\sigma\in K$, the set
$\varphi(\sigma)\cup\psi(\sigma)$ is a simplex of $L${;} that is,
if $v_0,\dots,v_k$ are the vertices of $\sigma$ then the vertices
$f(v_0),\dots,f(v_k),g(v_0),\dots,g(v_k)$ span a simplex of $L$.
{This relation, {denoted by $\varphi \ctg \psi$}, is reflexive and
symmetric,  but in general it is not transitive.}

\begin{defi}Two simplicial maps $\varphi,\psi\colon K \to L$ are
in the same {\em contiguity class}, denoted by $\varphi\ctgcl
\psi$, if there is a sequence {$$\varphi=\varphi_0 \ctg
\cdots \ctg \varphi_n=\psi$$} of contiguous {simplicial}  maps
$\varphi_i\colon K \to L$, $0\leq i\leq n$.
\end{defi}

A simplicial map $\varphi\colon K \to L$ is a {\em strong
equivalence} if there exists $\psi\colon L \to K$ such that
$\psi\circ\varphi \ctgcl \id_K$ and $\varphi\circ \psi \ctgcl
\id_L$. We write $K\ctgcl L$ if there is a strong equivalence
between the {complexes $K$ and $L$}. In the nice paper
\cite{BARMAKMINIAN} Barmak and Minian showed that strong homotopy
types can be described by {certain types of} elementary moves
called {\em strong collapses}. {A detailed exposition is in
Barmak's book \cite{BARMAK}.} {These moves are a particular
case of the well known notion of simplicial collapse}
\cite{COHEN}.

 \begin{defi}\label{DOMIN}A vertex $v$ of a simplicial complex $K$ is {\em dominated} by another vertex $v^\prime\neq v$ if  every maximal simplex that contains $v$ also contains $v^\prime$.
 \end{defi}
If $v$ is dominated by $v^\prime$ then   the inclusion $i \colon
K \setminus v \subset K$ is a strong equivalence. Its homotopical
inverse is the retraction $r\colon K \to K \setminus v$ which is
the identity on $K\setminus v$ and such that $r(v)=v^\prime$. This
retraction is called an {\em elementary strong collapse} from $K$
to $K \setminus v$, denoted by $K\seco K \setminus v$.

A {\em strong collapse} is a finite sequence of elementary
collapses. The inverse of a strong collapse is called a strong
expansion and two complexes $K$ and $L$ have the same {\em strong
homotopy type} if there is a sequence of strong collapses and
strong expansions that transform $K$ into $L$.

\begin{example}
\figref{FigureD} is an example of elementary strong collapse.

\begin{figure}[htbp]
\begin{center}
\includegraphics[width=0.5\textwidth]{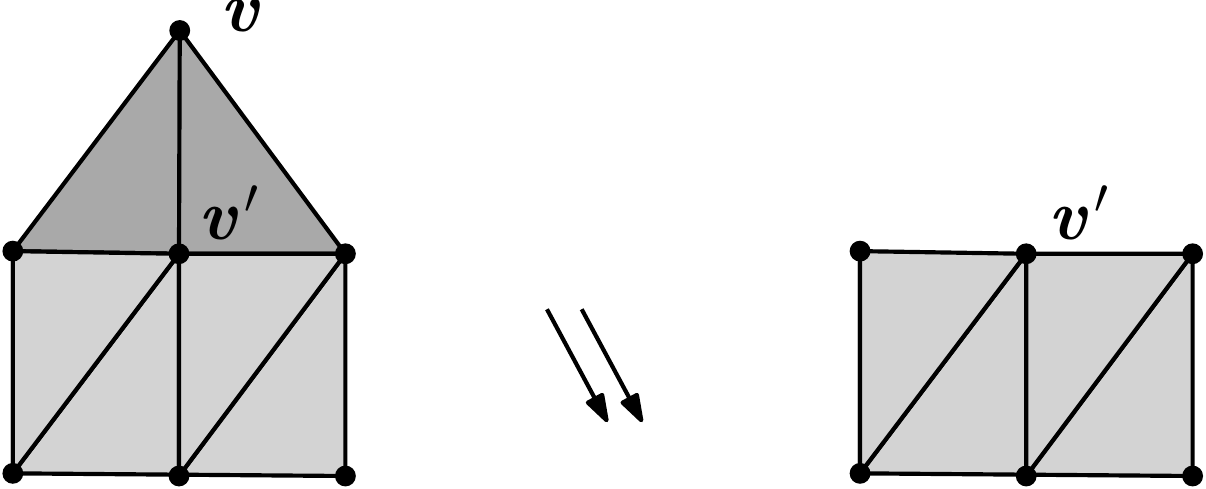}
\end{center}
\caption{Elementary strong collapse} \label{FigureD}
\end{figure}
\end{example}

The following result states that the notions of strong homotopy
type and strong equivalence (via contiguity) are the same.

\begin{theorem}\cite[Cor. 2.12]{BARMAKMINIAN} Two complexes $K$ and $L$ have the same strong homotopy type if and only if $K \ctgcl L$.
\end{theorem}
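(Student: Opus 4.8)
The plan is to prove the two implications separately, treating strong equivalence $\ctgcl$ as an equivalence relation on complexes and using the uniqueness of cores as the bridge. The first thing I would record is that contiguity is compatible with composition: if $\alpha \ctg \beta\colon L \to L$, then $h\circ\alpha\circ g \ctg h\circ\beta\circ g$ for any simplicial $g\colon K\to L$ and $h\colon L\to M$, because for each simplex $\sigma\in K$ the set $\alpha(g\sigma)\cup\beta(g\sigma)$ is a simplex of $L$, so $h$ sends it to a simplex containing $(h\alpha g)(\sigma)\cup(h\beta g)(\sigma)$. Chaining this along a contiguity sequence shows that pre- and post-composition preserve contiguity classes, and hence that the composite of two strong equivalences is again a strong equivalence. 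In particular $\ctgcl$ is reflexive, symmetric and transitive. Since each elementary strong collapse $K\seco K\setminus v$ is a strong equivalence (as recalled after Definition~\ref{DOMIN}), and its inverse expansion is one too, composing the strong equivalences attached to the finite sequence of collapses and expansions carrying $K$ to $L$ yields $K\ctgcl L$. This settles the forward implication.

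For the converse I would introduce cores. Repeatedly deleting dominated vertices from the finite complex $K$ produces, after finitely many elementary strong collapses, a subcomplex $K_0$ with no dominated vertices, a minimal complex or \emph{core}; thus $K\ctgcl K_0$, and likewise $L\ctgcl L_0$. The heart of the matter is the rigidity lemma: if $K_0$ is minimal and $\varphi\colon K_0\to K_0$ satisfies $\varphi\ctg\id_{K_0}$, then $\varphi=\id_{K_0}$. To see this, fix a vertex $v$ and a maximal simplex $\sigma\ni v$; contiguity makes $\sigma\cup\varphi(\sigma)$ a simplex, so its face $\sigma\cup\{\varphi(v)\}$ is a simplex containing the maximal simplex $\sigma$, forcing $\varphi(v)\in\sigma$. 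Hence every maximal simplex through $v$ also contains $\varphi(v)$, so if $\varphi(v)\neq v$ the vertex $v$ would be dominated by $\varphi(v)$, contradicting minimality; therefore $\varphi=\id_{K_0}$.

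This single-step statement upgrades to contiguity classes by induction along the chain: given self-maps $\id_{K_0}=\varphi_0 \ctg \cdots \ctg \varphi_n=\varphi$ of the minimal complex $K_0$, if $\varphi_{i-1}=\id_{K_0}$ then $\varphi_i\ctg\id_{K_0}$ forces $\varphi_i=\id_{K_0}$, so $\varphi=\id_{K_0}$. Consequently, if $K_0,L_0$ are minimal and $\varphi\colon K_0\to L_0$, $\psi\colon L_0\to K_0$ form a strong equivalence, then $\psi\varphi\ctgcl\id_{K_0}$ and $\varphi\psi\ctgcl\id_{L_0}$ give $\psi\varphi=\id_{K_0}$ and $\varphi\psi=\id_{L_0}$, so $\varphi$ is a simplicial isomorphism; strongly equivalent cores are isomorphic. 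Now assume $K\ctgcl L$. Transitivity gives $K_0\ctgcl K\ctgcl L\ctgcl L_0$ with $K_0,L_0$ minimal, whence $K_0\cong L_0$. Finally $K$ strongly collapses to $K_0$, which we identify with $L_0$ via this isomorphism, and the collapse $L\seco\cdots\seco L_0$ read backwards is a strong expansion; concatenating exhibits a sequence of strong collapses and expansions from $K$ to $L$, so they have the same strong homotopy type.

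I expect the rigidity lemma to be the main obstacle: the essential content is the passage from minimality (absence of dominated vertices) to rigidity of self-maps contiguous to the identity, together with its inductive extension to whole contiguity classes. Everything else is formal, namely the compatibility of contiguity with composition, the resulting equivalence-relation properties of $\ctgcl$, and the existence of cores for finite complexes.
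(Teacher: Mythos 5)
Your proof is correct. Note that the paper itself offers no proof of this statement---it imports it from Barmak--Minian \cite[Cor.~2.12]{BARMAKMINIAN}---and your argument reconstructs essentially the proof given in that reference: the forward implication by showing contiguity classes are preserved under composition and composing the strong equivalences attached to elementary collapses and expansions, and the converse via the rigidity lemma for minimal complexes (a self-map contiguous to the identity of a core is the identity, since otherwise some vertex would be dominated by its image) and the resulting uniqueness of cores up to isomorphism.
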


\subsection{Finite topological spaces}We are interested in {homotopical} properties of finite topological spaces.
First, let us recall the {correspondence}  between finite posets and finite $T_0$-spaces. If $(X,\leq)$ is a partially ordered finite  set, we consider  the  $T_0$ topology {on $X$} given by the basis $\{U_x\}_{x\in X}$ where
$$U_x=\{y\in X \colon y\leq x\}.$$

Conversely, if $(X,\tau)$ is a finite topological space, let $U_x$
be the minimal open set containing {$x\in X$}. Then we can define
a preorder by saying  $x\leq y$ if and only if $U_x\subset U_y$.
This preorder is an order if and only if $\tau$ is $T_0$. Under
this correspondence, a map $f\colon X \to Y$ {between finite
$T_0$-spaces} is continuous {if and only if} $f$ is order
preserving. Order spaces are also called ``Alexandrov spaces''.

\begin{prop}Any (finite) topological space has the homotopy type of a (finite) $T_0$-space.
\end{prop}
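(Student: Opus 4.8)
The plan is to realise the desired $T_0$-space as the \emph{Kolmogorov quotient} of $X$ and to show that the quotient projection is a homotopy equivalence; I will carry this out in the finite case (the parenthetical statement), which is the one relevant to the rest of the paper. Recall from the correspondence in \secref{PRELIM} that a finite space $X$ carries the preorder $x\leq y \iff U_x\subset U_y$, which is a genuine partial order precisely when $X$ is $T_0$. The obstruction to being $T_0$ is therefore measured by the relation $x\sim y \iff U_x=U_y$, equivalently $x\leq y$ and $y\leq x$; this is an equivalence relation, and collapsing each of its classes to a point should wash out exactly the failure of antisymmetry.

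First I would form the quotient set $X_0\defeq X/\!\sim$ with the quotient topology and projection $q\colon X\to X_0$. The relation induced on $X_0$ is the partial order obtained from the preorder of $X$, so $X_0$ is a finite $T_0$-space, and $q$ is continuous, order preserving, with minimal open sets $U_{q(x)}=q(U_x)$.

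Next I would build a homotopy inverse. Choosing one representative in each $\sim$-class gives a set map $s\colon X_0\to X$ with $q\circ s=\id_{X_0}$. Since $[x]\leq[y]$ in $X_0$ forces $x\leq y$ in $X$, and since each $s([x])$ is indistinguishable from $x$, the map $s$ is order preserving, hence continuous. It then remains only to compare $s\circ q$ with $\id_X$: for every $x$ one has $s(q(x))\sim x$, so in particular $s(q(x))\leq x$.

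The heart of the argument is the standard comparison lemma for finite spaces: if $f,g\colon Z\to X$ are continuous with $f(z)\leq g(z)$ for all $z$, then $f\simeq g$. The witnessing homotopy is $H(z,t)=f(z)$ for $t<1$ and $H(z,1)=g(z)$, and the only point that needs care — the main technical obstacle — is continuity of $H$ at the slice $t=1$. This follows from the fact that the open sets of an Alexandrov space are exactly the down-sets of the preorder, so whenever $g(z_0)\in V$ for an open $V$ one may take the neighbourhood $\bigl(f^{-1}(V)\cap g^{-1}(V)\bigr)\times(a,1]$ inside $H^{-1}(V)$. Applying the lemma to $s\circ q\leq \id_X$ yields $s\circ q\simeq \id_X$, so $q$ and $s$ are mutually inverse homotopy equivalences and $X\simeq X_0$ with $X_0$ finite and $T_0$. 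Since this construction and the comparison lemma use only the existence of minimal open sets, the same proof applies to any Alexandrov space, which in particular recovers the finite parenthetical statement.
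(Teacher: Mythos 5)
Your proposal is correct and takes essentially the same approach as the paper: the paper's entire proof is ``take the quotient by the equivalence relation $x\sim y$ if and only if $U_x=U_y$,'' i.e., exactly the Kolmogorov quotient you construct. You have simply supplied the verification the paper leaves implicit (the quotient is $T_0$, a choice of representatives gives a continuous section $s$, and the comparison lemma $f\leq g\Rightarrow f\simeq g$ yields $s\circ q\simeq \id_X$), all of which is carried out correctly.
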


\begin{proof}{Take the quotient} by the equivalence relation: {$x\sim y$ if and only if $U_x=U_y$.}
\end{proof}

{From now on, we shall deal with  {finite} spaces which are $T_0$.}

\begin{prop}\cite{WOFSEY} The connected components of $X$ are the equivalence classes of the equivalence relation generated by the order.
\end{prop}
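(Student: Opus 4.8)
The plan is to compare the topological connected components of $X$ with the equivalence classes of the relation $\approx$ generated by the order $\leq$, where $x\approx y$ means that there is a finite chain $x=x_0,x_1,\dots,x_n=y$ in which each consecutive pair is comparable. I would prove that the two partitions coincide by establishing that every $\approx$-class is simultaneously a clopen subset and a connected subspace of $X$.

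First I would recall that, for the Alexandrov topology given by the basis $\{U_x\}$ with $U_x=\{y : y\leq x\}$, a subset is open precisely when it is a down-set, that is, a union of sets $U_x$. Granting this, it is immediate that each $\approx$-class $C$ is open: if $x\in C$ and $y\leq x$, then $x$ and $y$ are comparable, so $y\approx x$ and hence $y\in C$; thus $U_x\subseteq C$ for every $x\in C$. Since the $\approx$-classes partition $X$, the complement of $C$ is a union of other (open) classes and is therefore open as well, so every $\approx$-class is in fact clopen.

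Next I would show that each $\approx$-class is connected. The key local fact is that two comparable points lie in the same component: if $x\leq y$ with $x\neq y$, then antisymmetry gives $y\notin U_x$, so the subspace $\{x,y\}$ is the Sierpi\'nski space (with $x$ open and $y$ not), which is connected; equivalently, the path $\gamma\colon[0,1]\to\{x,y\}$ with $\gamma(t)=x$ for $t<1$ and $\gamma(1)=y$ is continuous, since $\gamma\inv(\{x\})=[0,1)$ is open. Consequently comparable points are joined by a path, and by concatenating such paths along a chain realizing $x\approx y$, any two points of a single $\approx$-class are joined by a path. Hence each class is path-connected, and in particular connected.

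Finally I would combine the two properties. Because each class $C=[x]_\approx$ is connected, it is contained in the connected component of $x$; conversely, because $C$ is clopen and a connected component is contained in every clopen set meeting it (the intersection of the component with $C$ is clopen and nonempty in the component, hence equals the whole component by connectedness), the component of $x$ is contained in $C$. Therefore $[x]_\approx$ equals the connected component of $x$, which is the assertion. The only step calling for genuine topological input, rather than bookkeeping about the order, is this final one, where connectedness together with the clopen property forces the combinatorial classes and the topological components to agree; everything else reduces to the elementary translation between open sets and down-sets.
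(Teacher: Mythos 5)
Your proof is correct. Note that the paper does not actually prove this proposition---it is quoted from Wofsey's notes---so there is no internal argument to compare against; your write-up supplies exactly the standard proof of the cited fact. All three steps check out: the $\approx$-classes are down-sets (hence open, hence clopen since they partition $X$); comparable points are joined by the path $\gamma(t)=x$ for $t<1$, $\gamma(1)=y$, which is continuous because every open set containing $y$ is a down-set and therefore contains $x$; and the clopen-plus-connected argument then forces the two partitions to coincide. In fact your argument establishes the slightly stronger statement that the $\approx$-classes are the \emph{path} components of $X$, so for finite (or Alexandrov) spaces the order components, path components and connected components all agree---which is precisely the form in which this result appears in the literature on finite spaces.
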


{We now} consider the notion of homotopy.  Let $f,g\colon X\to Y$ be two continuous maps between finite spaces. {We write $f\leq g$  if $f(x)\leq g(x)$ for all $x\in X$. }

\begin{prop}\cite{BARMAKMINIAN}\label{HOMOTOPY} Two maps $f,g\colon X \to Y$ between finite spaces are homotopic, denoted by $f\simeq g$, if and only if they are in the same class of the equivalence relation generated by the relation {$\leq$} between maps.
\end{prop}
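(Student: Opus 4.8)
The statement is an equivalence, and I prove the two implications separately. Since $\simeq$ is itself an equivalence relation, while the relation generated by $\leq$ is by definition the smallest equivalence relation containing $\leq$, the implication ``same class $\Rightarrow f\simeq g$'' reduces, by transitivity and symmetry of $\simeq$, to the single case $f\leq g$. For this case the plan is to write down an explicit homotopy. Put $I=[0,1]$ and define $H\colon X\times I\to Y$ by $H(x,t)=f(x)$ for $t<1$ and $H(x,1)=g(x)$. To check continuity it suffices to verify that $H^{-1}(U_y)$ is open for every $y\in Y$, since the minimal open sets $U_y$ form a basis of $Y$. This preimage equals $\bigl(f^{-1}(U_y)\times[0,1)\bigr)\cup\bigl(g^{-1}(U_y)\times\{1\}\bigr)$, whose first piece is manifestly open; the only points needing attention are those of the form $(x,1)$ with $g(x)\leq y$. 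Around such a point the basic neighbourhood $U_x\times(1-\varepsilon,1]$ is contained in $H^{-1}(U_y)$, because for every $x'\leq x$ one has $f(x')\leq f(x)\leq g(x)\leq y$ (using that $f$ is order preserving and $f\leq g$) and $g(x')\leq g(x)\leq y$ (using that $g$ is order preserving). Hence $H$ is continuous with $H(\cdot,0)=f$, $H(\cdot,1)=g$, giving $f\simeq g$.

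For the converse I would start from a homotopy $H\colon X\times I\to Y$ with $H(\cdot,0)=f$, $H(\cdot,1)=g$, and write $H_t=H(\cdot,t)$. The key local observation is that each parameter value is a ``local maximum'' for the pointwise order: fixing $t_0$ and $x$, continuity makes $H^{-1}(U_{H(x,t_0)})$ an open set containing $(x,t_0)$, hence containing a basic neighbourhood $U_x\times(t_0-\delta_x,t_0+\delta_x)$, so that $H(x,t)\leq H(x,t_0)$ whenever $|t-t_0|<\delta_x$. As $X$ is finite I may take $\delta(t_0)=\min_x\delta_x>0$ and conclude $H_t\leq H_{t_0}$ for all $t$ with $|t-t_0|<\delta(t_0)$. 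This yields an open cover $\{(t-\delta(t),t+\delta(t))\}_{t\in I}$ of the compact interval $I$; choosing a partition $0=s_0<s_1<\cdots<s_m=1$ of mesh smaller than a Lebesgue number of this cover, each $[s_j,s_{j+1}]$ lies inside a single member $(t_j-\delta(t_j),t_j+\delta(t_j))$, whence $H_{s_j}\leq H_{t_j}\geq H_{s_{j+1}}$. Chaining these comparisons from $j=0$ to $m-1$ connects $H_{s_0}=f$ to $H_{s_m}=g$ by a finite fence of maps, so $f$ and $g$ lie in the same class of the equivalence relation generated by $\leq$.

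I expect the genuine work to sit in the converse, and specifically in the local-maximum step: extracting, from plain continuity of $H$ into a non-Hausdorff target, a single $\delta(t_0)$ that simultaneously controls every $x\in X$. This is precisely where finiteness of $X$ is indispensable, as it makes the $U_x$ open and finite in number; once the local comparability $H_t\leq H_{t_0}$ is in hand, compactness of $I$ converts it into a finite fence by a routine Lebesgue-number argument. One could instead phrase the converse through the finite mapping space $Y^X$ with its pointwise order, viewing $H$ as a path in $Y^X$ and invoking the earlier description of connected components as order-classes; but the adjunction $Y^{X\times I}\cong(Y^X)^I$ is delicate for non-Hausdorff $X$, so I prefer the direct argument above.
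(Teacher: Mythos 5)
Your proof is correct, and both halves hold up under scrutiny: the continuity check for the one-jump homotopy $H(x,t)=f(x)$ for $t<1$, $H(x,1)=g(x)$ is exactly right, and in the converse the passage from the pointwise $\delta_x$ to a uniform $\delta(t_0)$ (using minimality of the basic open sets $U_x$ and finiteness of $X$), followed by the Lebesgue-number fence, is sound. Note, however, that the paper itself gives no proof of this proposition: it is quoted from Barmak--Minian. The argument in that source (and in Barmak's book) is precisely the mapping-space one you chose to avoid: for finite $X$ and $Y$ the compact-open topology on $Y^X$ coincides with the Alexandrov topology of the pointwise order, the exponential correspondence between homotopies $X\times I\to Y$ and paths $I\to Y^X$ does hold (finite spaces, though not Hausdorff, are locally compact, each point having the compact neighbourhood $U_x$), and the statement then follows from the fact, recorded earlier in the paper for the space $X$ itself, that connected components of a finite space are exactly the classes of the equivalence relation generated by the order. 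Your direct argument is essentially that proof with $Y^X$ inlined: the uniform estimate $H_t\leq H_{t_0}$ for $|t-t_0|<\delta(t_0)$ is nothing but continuity of the adjoint $t\mapsto H_t$ into the Alexandrov topology, stated without naming $Y^X$, and your explicit homotopy is the standard witness that comparable maps are homotopic. What your route buys is self-containedness---no exponential law for non-Hausdorff spaces needs to be set up---at the cost of the Lebesgue-number bookkeeping; what the mapping-space route buys is conceptual economy, since the fence drops out of the already-known description of components of a finite space, a fact this paper quotes separately from Wofsey.
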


\begin{cor}\label{POINT}The basic open sets $U_x\subset X$ are contractible.
\end{cor}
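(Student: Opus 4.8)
The plan is to exhibit the identity map of $U_x$ as homotopic to a constant map, which is exactly what contractibility demands. The key observation is that, viewed as a poset, the basic open set $U_x=\{y\in X\colon y\leq x\}$ possesses a maximum element, namely $x$ itself: indeed $x\in U_x$ since $x\leq x$, and by definition every $y\in U_x$ satisfies $y\leq x$.

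Granting this, I would consider two self-maps of $U_x$: the identity $\id_{U_x}$ and the constant map $c_x$ sending every point to $x$. Both are order preserving (the constant map trivially so), hence continuous by the correspondence between finite $T_0$-spaces and posets recalled above. For every $y\in U_x$ we have $\id_{U_x}(y)=y\leq x=c_x(y)$, so that $\id_{U_x}\leq c_x$ in the pointwise order on maps. By \propref{HOMOTOPY} this single inequality already places the two maps in the same class of the equivalence relation generated by $\leq$, whence $\id_{U_x}\simeq c_x$.

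Since the identity of $U_x$ is thus homotopic to a constant map, $U_x$ is contractible, completing the proof. I do not expect any genuine obstacle: the whole argument reduces to the presence of a top element in $U_x$ together with the characterization of homotopy in \propref{HOMOTOPY}, so the only point to verify carefully is the pointwise comparison $\id_{U_x}\leq c_x$, which is immediate from the definition of $U_x$. One could equally well run the symmetric argument with the minimal open set around a \emph{minimum} element, but here the maximum $x$ is the natural choice.
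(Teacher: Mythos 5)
Your proof is correct and follows essentially the same route as the paper: the paper's one-line argument is precisely that $x$ is a maximum of $U_x$, so the constant map $c_x$ gives a deformation retraction, the homotopy coming from the pointwise inequality $\id_{U_x}\leq c_x$ via \propref{HOMOTOPY}. You have merely spelled out the details that the paper leaves implicit.
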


\begin{proof}Since {the point} $x$ is a maximum of $U_x$, it is a deformation retract of $U_x$ by means of the constant map $r=x\colon U_x \to U_x$.
\end{proof}

\subsection{Associated spaces and complexes}\label{ASSOC}
To each finite poset $X$ there is associated the so-called {\em
order complex} $\K(X)$. It   is the simplicial complex with vertex
set $X$ and whose simplices are given by the finite non-empty
chains in the order on $X$. {Moreover}, if $f\colon X \to Y$ is a
continuous map, the associated simplicial map $\K(f)\colon \K(X)
\to \K(Y)$ is defined as $\K(f)(x)=f(x)$ for each vertex $x\in X$.

\begin{prop}\cite[Prop.~2.1.2, Th.~5.2.1]{BARMAK}\label{BARMAK212}
\begin{enumerate}
\item If $f,g\colon X \to Y$ are homotopic maps then the simplicial maps $\K(f)$ and $\K(g)$ are in the same contiguity class.
\item
If two $T_0$-spaces $X,Y$ are homotopy equivalent, then the complexes $\K(X)$ and $\K(Y)$ have the same strong homotopy type.
\end{enumerate}
\end{prop}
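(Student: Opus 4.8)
The plan is to prove part (1) first and then deduce part (2) from it using functoriality of $\K$ together with the characterization of strong homotopy type by strong equivalence.

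For part (1), I would begin by reducing to comparable maps. By \propref{HOMOTOPY}, $f\simeq g$ means there is a chain $f=f_0,\dots,f_n=g$ in which consecutive maps are comparable for $\leq$; since the relation $\ctgcl$ is an equivalence relation, it suffices to treat the case $f\leq g$. A direct check (e.g. on a diamond-shaped target) shows that $\K(f)$ and $\K(g)$ need not themselves be contiguous, so I would build an explicit path in the contiguity class by interpolating one vertex at a time. Fix a linear extension $v_1,\dots,v_m$ of the order on $X$, so that $v_i\leq v_j$ forces $i\leq j$, and define $h_t\colon X\to Y$ by $h_t(v_i)=f(v_i)$ for $i\leq t$ and $h_t(v_i)=g(v_i)$ for $i>t$, so that $h_0=g$ and $h_m=f$. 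The crucial verification is that each $h_t$ is order-preserving: the only nontrivial case is $v_i\leq v_j$ with $i\leq t<j$, where $h_t(v_i)=f(v_i)\leq f(v_j)\leq g(v_j)=h_t(v_j)$, using monotonicity of $f$ and the hypothesis $f\leq g$. This is precisely where the choice of interpolation direction (the map $f$ on low-index vertices, $g$ on high-index ones) is forced.

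It then remains to show that $\K(h_{t-1})$ and $\K(h_t)$ are contiguous for each $t$. Since $h_{t-1}$ and $h_t$ agree except at the vertex $v_t$, for any chain $\sigma$ of $X$ the set $\K(h_{t-1})(\sigma)\cup\K(h_t)(\sigma)$ equals $h_t(\sigma)$ when $v_t\notin\sigma$ and equals $h_t(\sigma)\cup\{g(v_t)\}$ otherwise. Using the linear extension once more, one checks that $g(v_t)$ is comparable to every element of $h_t(\sigma)$ (vertices below $v_t$ land below $g(v_t)$, while vertices above $v_t$ have index $>t$ and so are sent by $h_t$ to $g$-values above $g(v_t)$), so this set is a chain, i.e. a simplex of $\K(Y)$. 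Hence $\K(g)=\K(h_0)\ctg\cdots\ctg\K(h_m)=\K(f)$, giving $\K(f)\ctgcl\K(g)$ and completing part (1).

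For part (2), let $f\colon X\to Y$ and $g\colon Y\to X$ realize the homotopy equivalence, so $g\circ f\simeq\id_X$ and $f\circ g\simeq\id_Y$. Since $\K$ is a functor ($\K(g\circ f)=\K(g)\circ\K(f)$ and $\K(\id)=\id$), part (1) yields $\K(g)\circ\K(f)\ctgcl\id_{\K(X)}$ and $\K(f)\circ\K(g)\ctgcl\id_{\K(Y)}$, so $\K(f)$ is a strong equivalence and $\K(X)\ctgcl\K(Y)$. By the characterization of strong homotopy type via strong equivalence \cite[Cor.~2.12]{BARMAKMINIAN}, the complexes $\K(X)$ and $\K(Y)$ have the same strong homotopy type. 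I expect the main obstacle to lie entirely in part (1), namely arranging the interpolation so that every intermediate map $h_t$ is order-preserving and verifying the single-vertex contiguity at each step; part (2) is then a short formal consequence.
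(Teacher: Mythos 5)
Your proof is correct. Note that the paper does not actually prove this proposition: it is quoted directly from Barmak's book (Prop.~2.1.2 and Th.~5.2.1 of \cite{BARMAK}), so there is no internal proof to compare against. Your argument is essentially the one in that reference: reduce to $f\leq g$, observe (correctly -- your diamond example is exactly the obstruction) that $\K(f)$ and $\K(g)$ need not be contiguous in one step, interpolate by changing the value at one point at a time in an order compatible with the poset structure (Barmak organizes this as induction on the set of points where $f$ and $g$ differ, modifying $f$ at a maximal such point, which is the same mechanism as your linear extension), and then deduce part (2) formally from part (1) via functoriality of $\K$ together with the characterization of strong homotopy type by strong equivalences from \cite{BARMAKMINIAN}.
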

Notice that {the reciprocal statements}  are not necessarily true, because two non-homotopic maps $f,g$ may induce maps $\K(f),\K(g)$ which are in the same contiguity class, through simplicial maps which do not preserve order.

Conversely, {it is possible to assign to any finite simplicial complex $K$} its {\em Hasse diagram} {or face poset, that is,} the poset of simplices of $K$ ordered by inclusion. If $\varphi\colon K \to L$ is a simplicial map, the associated continuous map  $\chi(\varphi)\colon \chi(K)\to \chi(L)$ {is given} by $\chi(\varphi)(\sigma)=\varphi(\sigma)$, for any simplex $\sigma$ of $K$.

\begin{prop}\cite[Prop.~2.1.3, Th.5.2.1]{BARMAK}\label{BARMAK213}
\begin{enumerate}
\item If the simplicial maps $\varphi,\psi\colon K \to L$ are in the same contiguity class then the continuous maps $\chi(\varphi),\chi(\psi)$ are homotopic.
\item
If two finite simplicial complexes $K,L$ have the same strong homotopy type, then the associated spaces $\chi(K),\chi(L)$ are homotopy equivalent.
\end{enumerate}
\end{prop}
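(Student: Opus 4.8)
The plan is to prove both parts by reducing everything to the pointwise relation $\leq$ between maps of finite spaces characterized in \propref{HOMOTOPY}. For part (1), since the contiguity class $\ctgcl$ is by definition the equivalence relation generated by single contiguities $\ctg$, and homotopy of maps is itself an equivalence relation, it suffices to treat the case where $\varphi\ctg\psi$ are actually contiguous and then chain the resulting homotopies along the contiguity sequence. So I would assume that $\varphi(\sigma)\cup\psi(\sigma)$ is a simplex of $L$ for every simplex $\sigma$ of $K$. The key construction is the map $\eta\colon\chi(K)\to\chi(L)$ defined on the face poset by $\eta(\sigma)=\varphi(\sigma)\cup\psi(\sigma)$. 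First I would check that $\eta$ is well defined, which is exactly where contiguity is used: the union $\varphi(\sigma)\cup\psi(\sigma)$ is a simplex of $L$ by hypothesis, hence a genuine element of $\chi(L)$. Next I would verify that $\eta$ is order preserving, hence continuous: if $\sigma\subseteq\tau$ in $\chi(K)$ then $\varphi(\sigma)\subseteq\varphi(\tau)$ and $\psi(\sigma)\subseteq\psi(\tau)$ because simplicial maps send faces to faces, so $\eta(\sigma)\subseteq\eta(\tau)$. Finally, since $\chi(\varphi)(\sigma)=\varphi(\sigma)\subseteq\eta(\sigma)$ and likewise $\chi(\psi)(\sigma)\subseteq\eta(\sigma)$ for every $\sigma$, we obtain $\chi(\varphi)\leq\eta$ and $\chi(\psi)\leq\eta$ in the order of \propref{HOMOTOPY}. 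That proposition then gives $\chi(\varphi)\simeq\eta\simeq\chi(\psi)$, settling the contiguous case.

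For part (2), the strategy is to lift a strong equivalence through the functor $\chi$ using part (1). By \cite[Cor. 2.12]{BARMAKMINIAN} the complexes $K$ and $L$ have the same strong homotopy type precisely when $K\ctgcl L$, so there exist simplicial maps $\varphi\colon K\to L$ and $\psi\colon L\to K$ with $\psi\circ\varphi\ctgcl\id_K$ and $\varphi\circ\psi\ctgcl\id_L$. I would first record that $\chi$ is a functor, namely $\chi(\psi\circ\varphi)=\chi(\psi)\circ\chi(\varphi)$ and $\chi(\id_K)=\id_{\chi(K)}$, both immediate from the defining formula $\chi(\varphi)(\sigma)=\varphi(\sigma)$. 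Applying part (1) to the two contiguity-class identities then yields $\chi(\psi)\circ\chi(\varphi)=\chi(\psi\circ\varphi)\simeq\chi(\id_K)=\id_{\chi(K)}$ and symmetrically $\chi(\varphi)\circ\chi(\psi)\simeq\id_{\chi(L)}$. Hence $\chi(\varphi)$ is a homotopy equivalence with homotopy inverse $\chi(\psi)$, so $\chi(K)$ and $\chi(L)$ are homotopy equivalent.

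The genuinely substantive step is the construction of $\eta$ in part (1); everything else is formal. The potential obstacle lies entirely in the verifications surrounding $\eta$: one must confirm that well-definedness really does require, and only require, the contiguity hypothesis, and that the vertex-set inclusions $\varphi(\sigma)\subseteq\varphi(\sigma)\cup\psi(\sigma)$ translate into the correct order relation on the face poset $\chi(L)$ rather than a reversed one. Once the order on $\chi(L)$ (inclusion of simplices) is pinned down and matched against the convention $f\leq g$ of \propref{HOMOTOPY}, both parts follow mechanically.
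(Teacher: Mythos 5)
Your proposal is correct, and it is essentially the argument of the cited source: the paper itself gives no proof of this proposition (it is quoted from Barmak's book, Prop.~2.1.3 and Th.~5.2.1), and the standard proof there is exactly your construction, namely the order-preserving map $\sigma\mapsto\varphi(\sigma)\cup\psi(\sigma)$ sitting above both $\chi(\varphi)$ and $\chi(\psi)$ in the pointwise order, combined with the characterization of homotopy in \propref{HOMOTOPY} and functoriality of $\chi$ for part (2). All the verifications you flag (well-definedness via contiguity, order-preservation, and the direction of the inequalities in the face poset) go through exactly as you describe.
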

\subsection{LS-category}\label{LSCAT} {We recall the basic definitions of  Lus\-ter\-nik-Schni\-rel\-mann theory. Well known references are \cite{CORNEA} and \cite{JAMES}.}

An {open subset {$U$ of a topological space $X$ is called {\em categorical} if $U$ can be contracted to a point inside the ambient space $X$. In other words, the inclusion $U\subset X$ is homotopic to some constant map.}

\begin{defi}\label{DEFLSCAT}
The {\em Lusternik-Schnirelmann category}, $\cat X$, of  $X$ is  the least integer $n\geq 0$ such that there is a cover of X by $n+1$ categorical open subsets. We write $\cat X=\infty$ if such a cover does not exist.
\end{defi}

Category is an invariant of homotopy type. {Another interesting
notion, the {\em geometric category}, denoted by $\gcat X$, can be
defined in a similar way  using subsets of $X$ which are
contractible in themselves}, instead of contractible in the
ambient space $X$.  By definition, $\cat X \leq \gcat X$. However,
geometric category is not a homotopy invariant
\cite[p.~79]{CORNEA}.
 \begin{remark}
{For ANRs
one can use  {\em closed} covers, instead of  open
covers, in the definition of LS-category. However, these two notions  would  lead  to different theories in the setting of finite spaces.} {For instance,    for the finite space  of   \examref{Xop}, we obtain different values
 for the corresponding categories.} {This work is limited} to the nowadays most common definition of LS-category, {that is,} using categorical open subsets.
\end{remark}

 \begin{remark}
{Actually, the definition of LS-category by covers is not
well-suited for many constructions in homotopy theory.} This led
to  alternative definitions (Ganea, Whithead \cite{CORNEA}) which
are well known in algebraic topology. However, those constructions
require that the space $X$ satisfies some additional properties.
One of them, the existence of  {\em non-degenerate base-points} is
guaranteed by  Prop. \ref{POINT}. But other properties, like being
Hausdorff or even normal, are not satisfied by finite spaces
{(notice that every  finite $T_1$-space is discrete)},  so we have
not explored them further.
 \end{remark}

\section{LS-category of simplicial complexes}\label{LSCATSC}
 We work in the category of {\em finite} simplicial complexes and simplicial maps \cite{SPANIER}. {{The key notion introduced in this paper is that of}  {\em LS-category} in the simplicial
 setting. {This construction} is the natural one when the notion of ``homotopy'' is that of contiguity class. Contiguous maps were considered in Subsection~\ref{PRELIMSC}.}

 \subsection{Simplicial category}

 \begin{defi} {Let $K$ be a simplicial complex}. We say that the subcomplex $U\subset K$ is {\em categorical} if there exists a vertex $v\in K$ such that the inclusion $i\colon U \to K$ and the constant map $c_v\colon U \to K$ are in the same contiguity class, $i_U\ctgcl c_v$.
 \end{defi}

 In other words, $i$ factors through $v$ up to ``homotopy'' (in the sense of contiguity class). {Notice that a categorical subcomplex may not  be connected.}

 \begin{defi}\label{CATSC}The {\em simplicial LS-category}, $\scat K$, of the simplicial complex $K$, is the least integer $m\geq 0$ such that $K$ can be covered by $m+1$ categorical subcomplexes.
 \end{defi}

 {For instance,  $\scat\ K=0$  if and only if $K$ has the  strong homotopy type of a point. }

 \begin{example} {The simplicial complex $K$ of \figref{FigureE} appears in \cite{BARMAKMINIAN}.}
It is collapsible {(in the usual sense)} but not strongly
collapsible, so $\scat K\geq 1$. We can obtain a cover by two
strongly collapsible subcomplexes taking a non internal 2-simplex
$\sigma$ and its complement $K\setminus\sigma$. {Thus $\scat
K=1$.}

\begin{figure}[htbp]
\begin{center}
\includegraphics[width=0.3\textwidth]{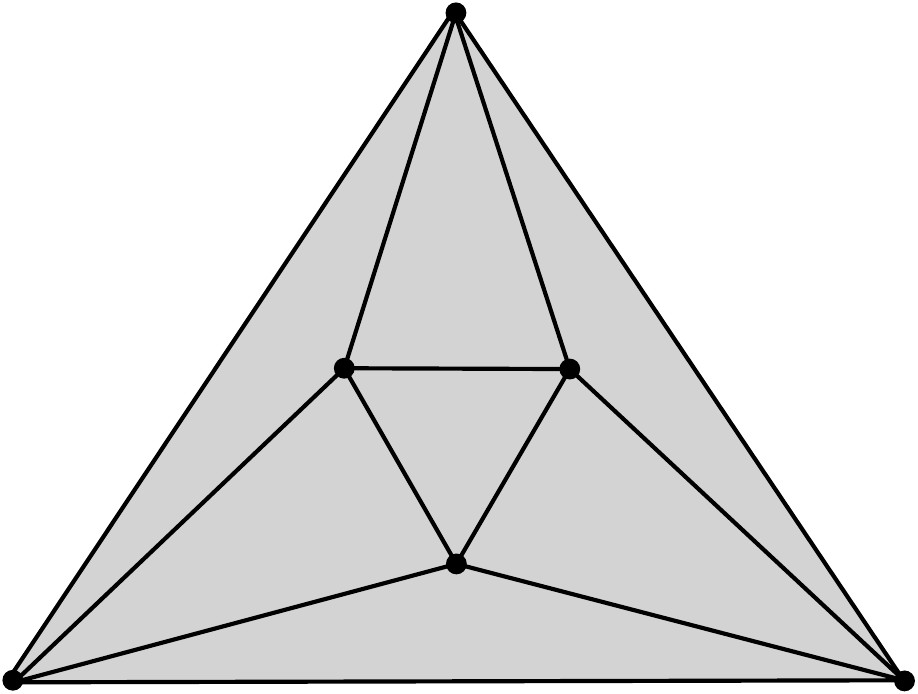}
\end{center}
\caption{A complex $K$ with $\scat K=1$.}
\label{FigureE}
\end{figure}

{This example shows that scat depends on the simplicial
structure more than on the geometric realization of the complex.}
 \end{example}

 \subsection{Homotopical invariance}

{The most important property of the simplicial category is
that it is an invariant of the strong equivalence type, as we
shall prove now.}

 \begin{theorem}\label {MAIN} Let $K\sim L$ {be} two strongly equivalent complexes. {Then} $\scat K =\scat L$.
 \end{theorem}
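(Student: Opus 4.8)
The plan is to show that a strong equivalence $\varphi\colon K\to L$ induces a bijection between categorical covers of $L$ and categorical covers of $K$ of the same cardinality, from which the equality of the minima follows. Since $K\ctgcl L$, there is $\psi\colon L\to K$ with $\psi\circ\varphi\ctgcl\id_K$ and $\varphi\circ\psi\ctgcl\id_L$. The natural strategy is to prove the two inequalities $\scat K\le\scat L$ and $\scat L\le\scat K$ separately; by the symmetry of the hypothesis it suffices to establish one of them, say $\scat K\le\scat L$.

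First I would take a minimal categorical cover $L=V_0\cup\cdots\cup V_m$ of $L$, so $m=\scat L$ and each inclusion $i_{V_j}\colon V_j\to L$ is in the same contiguity class as a constant map $c_{w_j}$. I would then set $U_j\defeq\varphi\inv(V_j)$ and check that these form a cover of $K$: since the $V_j$ cover $L$ and $\varphi$ is simplicial, the preimages are subcomplexes covering $K$. The heart of the argument is to verify that each $U_j$ is categorical in $K$. Writing $i_{U_j}\colon U_j\to K$ for the inclusion, I would factor it (up to contiguity) through $L$ using the homotopy inverse: on $U_j$ the composite $\varphi\circ i_{U_j}$ lands in $V_j$, so one has $\psi\circ\varphi\circ i_{U_j}\ctgcl\psi\circ i_{V_j}\circ(\varphi|_{U_j})\ctgcl\psi\circ c_{w_j}\circ(\varphi|_{U_j})$, which is the constant map $c_{\psi(w_j)}$; and on the other hand $\psi\circ\varphi\circ i_{U_j}\ctgcl\id_K\circ i_{U_j}=i_{U_j}$.

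For this chain of relations to be legitimate I need two auxiliary facts about contiguity classes, which I would isolate as preliminary lemmas (or invoke if already available): that the contiguity relation is preserved under pre- and post-composition with a fixed simplicial map, and that a constant map composed with anything remains constant in the same class. Granting these, the displayed relations give $i_{U_j}\ctgcl c_{\psi(w_j)}$, so $U_j$ is categorical, and the cover $K=U_0\cup\cdots\cup U_m$ witnesses $\scat K\le m=\scat L$. Applying the identical argument to $\psi$ yields $\scat L\le\scat K$, and the two inequalities combine to the desired equality.

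The main obstacle I anticipate is the composition lemma for contiguity classes: contiguity itself is not transitive, so one must be careful that composing a finite contiguity sequence with a fixed simplicial map on either side again produces a valid contiguity sequence. This reduces to the elementary observation that if $\alpha\ctg\beta$ then $\alpha\circ\gamma\ctg\beta\circ\gamma$ and $\delta\circ\alpha\ctg\delta\circ\beta$ for simplicial $\gamma,\delta$ (the image of a simplex under a simplicial map is a simplex, and contiguity is defined by a union of images spanning a simplex), but it must be stated cleanly before the theorem so that the factorization above is rigorous rather than merely suggestive.
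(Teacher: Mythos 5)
Your proposal is correct and follows essentially the same route as the paper: the paper proves an auxiliary proposition (if $g\circ f \ctgcl 1_K$ then $\scat K\leq \scat L$) by pulling back a categorical cover along $f^{-1}$ and transferring the contiguity sequences via exactly the pre/post-composition lemma you isolate (the paper's Lemma~\ref{UNO}), then concludes by symmetry. The only cosmetic difference is that the paper's intermediate statement needs only the one-sided hypothesis $g\circ f\ctgcl 1_K$, whereas you invoke the full strong equivalence before appealing to symmetry; the substance of the argument is identical.
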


 We begin with two Lemmas which are easy to prove.

 \begin{lemma}\label{UNO}Let $f,g\colon K \to L$ be {two } contiguous maps, $f\ctg g$, and let $i\colon N\to K$
 (resp.  $r\colon L\to N$) {be another simplicial map}.
  Then $f\circ i\ctg g\circ i$ (resp. $r\circ f \ctg r\circ g$).
 \end{lemma}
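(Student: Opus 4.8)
The plan is to verify the defining condition of contiguity directly, keeping track of how simplicial maps act on simplices at the level of vertex sets. Recall that $f\ctg g$ means precisely that $f(\sigma)\cup g(\sigma)$ is a simplex of $L$ for every simplex $\sigma$ of $K$, where for a simplicial map $\varphi$ and a simplex $\sigma$ with vertices $v_0,\dots,v_k$ we write $\varphi(\sigma)=\{\varphi(v_0),\dots,\varphi(v_k)\}$. The only two facts I will use are that a simplicial map carries each simplex to a simplex, and the elementary set-theoretic identity $\varphi(A\cup B)=\varphi(A)\cup\varphi(B)$ for vertex sets.

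For the first assertion, $f\circ i\ctg g\circ i$, I would take an arbitrary simplex $\tau$ of $N$. Since $i\colon N\to K$ is simplicial, $\sigma\defeq i(\tau)$ is a simplex of $K$. Then $(f\circ i)(\tau)\cup(g\circ i)(\tau)=f(\sigma)\cup g(\sigma)$, and the right-hand side is a simplex of $L$ by the hypothesis $f\ctg g$ applied to $\sigma$. As $\tau$ was arbitrary, this establishes $f\circ i\ctg g\circ i$.

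For the second assertion, $r\circ f\ctg r\circ g$, I would instead start from an arbitrary simplex $\sigma$ of $K$. By hypothesis $\rho\defeq f(\sigma)\cup g(\sigma)$ is already a simplex of $L$, and since $r\colon L\to N$ is simplicial its image $r(\rho)$ is a simplex of $N$. Using the identity for images of unions, $(r\circ f)(\sigma)\cup(r\circ g)(\sigma)=r(f(\sigma))\cup r(g(\sigma))=r\bigl(f(\sigma)\cup g(\sigma)\bigr)=r(\rho)$, which is a simplex of $N$. Hence $r\circ f\ctg r\circ g$.

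There is essentially no serious obstacle here; the statement is elementary and the whole content is bookkeeping at the level of vertex sets. The one point worth care is the asymmetry between the two cases: in the pre-composition case the auxiliary map $i$ is used to \emph{produce} the simplex $\sigma=i(\tau)$ of $K$ on which the contiguity hypothesis is invoked, whereas in the post-composition case the map $r$ is applied \emph{after} the contiguity hypothesis has already supplied the simplex $f(\sigma)\cup g(\sigma)$. Keeping straight which map generates the simplex and which map transports it is all that is required.
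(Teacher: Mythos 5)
Your proof is correct and complete; the paper itself gives no proof of this lemma (it is simply declared ``easy to prove''), and your direct verification from the definition of contiguity --- using that simplicial maps send simplices to simplices and that images commute with unions of vertex sets --- is exactly the standard argument the authors leave to the reader.
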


 \begin{lemma}\label{DOS}Let
$$K=K_0 \stackrel{f_1}{\to} K_1\to \cdots \stackrel{f_n}{\to} K_n=L$$
 and
$$L=K_n \stackrel{g_n}{\to} \cdots \to  K_1 \stackrel{g_1}{\to}K_0=K$$
be two sequences of maps such that $g_i\circ f_i \ctg 1$ and
$f_i\circ g_i\ctg 1$, for all $i\in\{1,\dots,n\}$. Then the
complexes $K$ and $L$ are strongly equivalent, $K\ctgcl L$.
 \end{lemma}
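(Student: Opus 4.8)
The plan is to exhibit an explicit strong equivalence between $K$ and $L$ assembled from the given sequences. Set $F\defeq f_n\circ\cdots\circ f_1\colon K\to L$ and $G\defeq g_1\circ\cdots\circ g_n\colon L\to K$. By the definition of strong equivalence it suffices to prove that $G\circ F\ctgcl\id_K$ and $F\circ G\ctgcl\id_L$.

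First I would treat $G\circ F$ and peel off the hypotheses from the inside out. For $0\le k\le n$ put
$$W_k\defeq(g_1\circ\cdots\circ g_k)\circ(f_k\circ\cdots\circ f_1)\colon K\to K,$$
with the conventions $W_0=\id_K$ (empty compositions) and $W_n=G\circ F$. The key step is to show $W_k\ctg W_{k-1}$ for each $k$: the map $W_k$ has the block $g_k\circ f_k$ at its center, and the hypothesis supplies $g_k\circ f_k\ctg\id_{K_{k-1}}$. Applying \lemref{UNO} to pre-compose this relation with $f_{k-1}\circ\cdots\circ f_1$ and post-compose it with $g_1\circ\cdots\circ g_{k-1}$ replaces the central block by the identity, and the result is exactly $W_{k-1}$.

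Chaining these single contiguities yields
$$G\circ F=W_n\ctg W_{n-1}\ctg\cdots\ctg W_1\ctg W_0=\id_K,$$
which is precisely a witness that $G\circ F$ and $\id_K$ lie in the same contiguity class, i.e. $G\circ F\ctgcl\id_K$. The symmetric argument, now centering on the blocks $f_k\circ g_k\ctg\id_{K_k}$ and peeling from the inside of the composite $F\circ G=f_n\circ\cdots\circ f_1\circ g_1\circ\cdots\circ g_n$, gives $F\circ G\ctgcl\id_L$, and hence $K\ctgcl L$.

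The argument is essentially routine; the one point requiring care is that contiguity $\ctg$ is not transitive, so one cannot transport the single relation $g_k\circ f_k\ctg\id$ through an arbitrary composition in a single stroke. This is exactly why I organize the proof as a chain of individual moves: \lemref{UNO} certifies that each single step $W_k\ctg W_{k-1}$ is legitimate, while the \emph{contiguity class} relation $\ctgcl$ is by definition the equivalence relation generated by $\ctg$, so assembling these steps into the displayed chain is all that the conclusion requires.
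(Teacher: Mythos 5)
Your proof is correct, and it is exactly the routine telescoping argument the authors have in mind: the paper states this lemma without proof (``easy to prove''), and your construction of $F=f_n\circ\cdots\circ f_1$, $G=g_1\circ\cdots\circ g_n$ together with the chain $W_n \ctg W_{n-1}\ctg \cdots \ctg W_0$, each step being two applications of \lemref{UNO} to replace the inner block $g_k\circ f_k$ (resp.\ $f_k\circ g_k$) by the identity, is the intended argument. Your observation that the non-transitivity of $\ctg$ forces one to assemble the conclusion as a chain of individual contiguities, rather than in a single stroke, is the only point requiring care, and you handle it properly.
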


The main Theorem \ref{MAIN} will be {a direct}  consequence of the following Proposition (by interchanging the roles of $K$ and $L$).

\begin{prop} Let $f\colon K \to L$ and $g\colon L\to K$ be simplicial maps such that $g\circ f \ctgcl 1_K$. Then $\scat K \leq \scat L$.
\end{prop}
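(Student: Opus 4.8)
The plan is to take a minimal categorical cover of $L$ and pull it back along $f$ to produce a categorical cover of $K$ of the same cardinality. Write $m=\scat L$ and fix subcomplexes $V_0,\dots,V_m$ covering $L$, each categorical: for every $j$ there is a vertex $w_j\in L$ with $i_{V_j}\ctgcl c_{w_j}$ as maps $V_j\to L$. For each $j$ set $U_j\defeq f\inv(V_j)$, the full preimage subcomplex, consisting of those simplices $\tau$ of $K$ with $f(\tau)\in V_j$.

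First I would check that $\{U_j\}$ is a cover of $K$ by subcomplexes. That each $U_j$ is a subcomplex is immediate, since faces of $\tau$ map under $f$ to faces of $f(\tau)$, which again lie in $V_j$. For the covering property: given any simplex $\sigma$ of $K$, its image $f(\sigma)$ is a simplex of $L$, hence lies in some $V_j$, so $\sigma\in U_j$. Thus the $U_j$ cover $K$, and there are exactly $m+1$ of them.

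The heart of the argument is to show that each $U_j$ is categorical in $K$. Let $\bar f_j\colon U_j\to V_j$ denote the corestriction of $f$, so that $f\circ i_{U_j}=i_{V_j}\circ\bar f_j$ as maps $U_j\to L$. I would first upgrade \lemref{UNO} from single contiguities to whole contiguity classes: precomposition and postcomposition by a fixed simplicial map carry a contiguity chain to a contiguity chain, and hence preserve $\ctgcl$. Now chain the following relations of maps $U_j\to K$. Precomposing $g\circ f\ctgcl 1_K$ with $i_{U_j}$ gives $i_{U_j}\ctgcl g\circ f\circ i_{U_j}=g\circ i_{V_j}\circ\bar f_j$. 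Next, the categoricity $i_{V_j}\ctgcl c_{w_j}$, postcomposed with $g$ and precomposed with $\bar f_j$, gives $g\circ i_{V_j}\circ\bar f_j\ctgcl g\circ c_{w_j}\circ\bar f_j$. Finally $c_{w_j}\circ\bar f_j$ is the constant map at $w_j$ (a constant map precomposed with anything is constant), so $g\circ c_{w_j}\circ\bar f_j=c_{g(w_j)}$. By transitivity of $\ctgcl$ we conclude $i_{U_j}\ctgcl c_{g(w_j)}$, i.e. $U_j$ is categorical in $K$ with witness vertex $g(w_j)$.

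Assembling the pieces, $\{U_j\}_{j=0}^{m}$ is a cover of $K$ by $m+1$ categorical subcomplexes, whence $\scat K\le m=\scat L$. The only subtle point, and the step I would be most careful about, is the extension of \lemref{UNO} to contiguity classes together with the transitivity used to splice the three $\ctgcl$-relations above; since $\ctgcl$ is by definition the equivalence relation generated by $\ctg$, both are routine, but they are precisely what makes the hypothesis $g\circ f\ctgcl 1_K$ (rather than an honest equality) sufficient for the conclusion.
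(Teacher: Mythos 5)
Your proposal is correct and follows essentially the same route as the paper's proof: pull back a categorical cover of $L$ along $f$, and show each preimage $f\inv(V_j)$ is categorical by splicing the chain for $g\circ f\ctgcl 1_K$ (precomposed with the inclusion) with the chain for $i_{V_j}\ctgcl c_{w_j}$ (pre- and postcomposed with the corestriction of $f$ and with $g$), exactly as in the paper's equations \eqref{PRIM} and \eqref{SECON}. The only cosmetic differences are that you spell out why the preimages form a cover by subcomplexes and state the contiguity-class version of \lemref{UNO} explicitly, both of which the paper leaves implicit.
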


\begin{proof}Let $U\subset L$ be a categorical subcomplex. Since the inclusion $i_U$ is in the  contiguity class of some constant map $c_v$, there exists a sequence of maps $\varphi_i\colon U \to K$, $0\leq i\leq n$, such that
{$$i_U=\varphi_0  \ctg \cdots \ctg \varphi_n=c_v.$$} Take the subcomplex $f^{-1}(U)\subset K$. We shall prove that $f^{-1}(U)$ is categorical.
Since $g\circ f\sim 1_K$, there is a sequence of maps $\psi_i\colon K\to K$, $0\leq i\leq {m}$,  such that
$$1_K=\psi_0 \ctg\cdots\ctg\psi_m=g\circ f.$$
Denote {by} $f^\prime$ the restriction of $f$ to $f^{-1}(U)$, with values in $U$, that is, $f^\prime\colon f^{-1}(U)\to U$, defined by $f^\prime(x)=f(x)$. Denote by $j\colon f^{-1}(U)\subset K$ the inclusion.
Then:
{\begin{equation}\label{PRIM}
j=1_K\circ j=\psi_0\circ j\ctg \cdots \ctg \psi_m\circ j = g\circ f\circ j
\end{equation}}
by Lemma \ref{UNO}. {Since $f\circ j=i_U\circ f^\prime $, we
have} {\begin{equation}\label{SECON} g\circ f\circ j=g\circ
i_U\circ f^\prime=g\circ\varphi_0\circ f^\prime  \ctg \cdots \ctg
g\circ \varphi_n\circ f^\prime.
\end{equation}}
But $\varphi_n=c_v$, so $g\circ \varphi_n\circ f^\prime\colon
f^{-1}(U) \to g(U)$ is the constant map $c_{g(v)}$. Combining
\eqref{PRIM} and \eqref{SECON} we obtain
$$j\ctgcl c_{g(v)}.$$
Therefore, the subcomplex $f^{-1}(U)\subset K$ is categorical.

Finally, let {$k=\scat L$} {and} let $\{U_0,\dots,U_k\}$ be a categorical cover of $L$;
then $\{f^{-1}(U_0),\dots, f^{-1}(U_k)\}$ is a categorical cover of $K$, which shows that $\scat K\leq k$.
\end{proof}

{A {\em core} of a finite simplicial complex $K$  is a
subcomplex $K_0\subset K$ without dominated vertices, such that $K\seco K_0$ \cite{BARMAKMINIAN}.
Every complex has a core, which is unique up to isomorphism,
and two finite simplicial complexes have the same strong homotopy type
if and only if their cores are isomorphic. Since $\scat$ is an invariant of the strong homotopy type (\thmref{MAIN}) we have proved the following result.

\begin{cor}\label{CORK0} Let $K_0$ be the core of the simplicial complex $K$. Then $\scat K = \scat K_0$.
\end{cor}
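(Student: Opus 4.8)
The plan is to deduce this immediately from the homotopy invariance of simplicial category established in \thmref{MAIN}, together with the already-recorded relationship between strong collapses and strong equivalence. The single observation that drives everything is that the core $K_0$ is, by its very definition, reached from $K$ by a strong collapse $K \seco K_0$, and a strong collapse is a particular kind of strong equivalence.

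First I would unwind the definition of core: $K_0$ is a subcomplex with no dominated vertices such that $K \seco K_0$, meaning there is a finite sequence of elementary strong collapses leading from $K$ to $K_0$. I would then observe that each elementary strong collapse is a strong equivalence. This is exactly what was recorded just after \defiref{DOMIN}: when $v$ is dominated by $v'$, the inclusion $i\colon K'\setminus v \subset K'$ is a strong equivalence, with homotopical inverse the retraction $r$ fixing $K'\setminus v$ and sending $v$ to $v'$, so that $K'$ and $K'\setminus v$ have the same strong homotopy type.

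Next I would pass from "same strong homotopy type" to "strongly equivalent." Since $K \seco K_0$ means $K$ and $K_0$ have the same strong homotopy type, the Theorem of Barmak and Minian quoted above (\cite[Cor.~2.12]{BARMAKMINIAN}) gives at once that $K \ctgcl K_0$, i.e.\ $K$ and $K_0$ are strongly equivalent in the sense of contiguity. Finally, applying \thmref{MAIN} to the strongly equivalent pair $K$ and $K_0$ yields $\scat K = \scat K_0$, which is the claim.

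I do not expect any genuine obstacle here, as this is a formal corollary rather than a result requiring new ideas. The only point deserving a moment of care is the assembly of the chain of elementary strong collapses into a single strong equivalence $K \ctgcl K_0$; this is precisely what the cited equivalence between strong homotopy type and strong equivalence provides, so no separate argument is needed and the corollary follows directly.
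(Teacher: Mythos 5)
Your proposal is correct and follows exactly the paper's own route: the core satisfies $K\seco K_0$, a strong collapse yields a strong equivalence $K\ctgcl K_0$ (via the quoted Barmak--Minian equivalence), and then \thmref{MAIN} gives $\scat K=\scat K_0$. Your write-up merely spells out in more detail the composition of elementary collapses into a single strong equivalence, which the paper leaves implicit.
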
}

\section{Geometric category}
{As in the classical case}, {we shall   introduce a notion  of
{\em simplicial geometric category} $\gscat$ in the simplicial
setting, when   ``homotopy'' means to be in the same contiguity
class}. Another so-called {\em discrete category}, $\dcat$, which
takes into account the notion of {\em collapsibility} instead of
strong collapsibility, has been considered by Scoville et al in
\cite{SCOVILLE}. But in contrast with the simplicial LS-category
introduced in Section \ref{LSCATSC}, both $\gscat$ and $\dcat$ are
not homotopy invariant. The problem must then be overcome by
taking the infimum of the category values {over} all  simplicial
complexes which are homotopy equivalent to the given one.

However, our geometric category possesses a remarkable property: due to the  notion of {\em core} complex explained before, there is also a {\em maximum} of category among the complexes in a given homotopy class.

\begin{remark}It is possible to do a translation of the notion of simple collapsibility to finite topological spaces, by means of the notion of {\em weak} beat point \cite{BARMAKMINIAN2}.
\end{remark}

\subsection{Simplicial geometric category}

According to the notion of strong collapse (defined in Section
\ref{PRELIM}), a simplicial complex $K$ is {\em strongly
collapsible} if it is strongly equivalent to a point.
Equivalently, the identity $1_K$ is in the contiguity class of
some constant map $c_v\colon K \to K$.

\begin{defi}
{The {\em simplicial  geometric category}} $\gscat K$ of the
simplicial complex $K$ is the least integer $m\geq 0$ such that
$K$ can be covered by $m+1$ strongly collapsible subcomplexes.
That is, there exists {a cover} $U_0,\dots,U_{m}\subset K$ of $K$
such that    $U_i\sim \ast$, for all $i\in\{0,\dots m\}$.
\end{defi}

Notice that strongly collapsible subcomplexes must be connected. \begin{prop}$\scat K\leq \gscat K$.
\end{prop}

\begin{proof} The proof is reduced to checking that a strongly
collapsible subcomplex is categorical: in fact, the {only}
difference is that in the first case the identity $1_U$ is in the
contiguity class of some constant map $c_v$, while in the second
it is the inclusion $i_U\colon U \to X$  that satisfies $i_U\sim
c_v$.
\end{proof}

\subsection{Behaviour under strong collapses} Obviously, $\scat$ and $\gscat$ are invariant by simplicial isomorphisms.  Moreover we proved in \thmref{MAIN} that $\scat$ is a homotopy invariant.
The next Theorem shows that strong collapses increase the
geometric category.

\begin{theorem}If $L$ is a strong collapse of $K$ then $\gscat L\geq \gscat K$.
\end{theorem}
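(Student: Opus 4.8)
The plan is to reduce to a single elementary strong collapse and then transport a strongly collapsible cover of the smaller complex back to the larger one along the retraction. Since a strong collapse is by definition a finite sequence of elementary strong collapses, and the inequality would chain along such a sequence, it suffices to treat one elementary strong collapse $K\seco L$, where $L=K\setminus v$ for a vertex $v$ dominated by some $v'\neq v$. Let $r\colon K\to L$ be the associated retraction, with $r(v)=v'$ and $r$ the identity on $L$; recall that $r$ is a simplicial map.

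Now set $m=\gscat L$ and let $\{U_0,\dots,U_m\}$ be a cover of $L$ by strongly collapsible subcomplexes. I would define $V_i\defeq r^{-1}(U_i)=\{\sigma\in K\colon r(\sigma)\in U_i\}$. Exactly as in the preimage construction used in the previous Proposition, each $V_i$ is a subcomplex of $K$, and since $r$ maps $K$ onto $L$ one has $\bigcup_i V_i=r^{-1}(\bigcup_i U_i)=r^{-1}(L)=K$. Thus $\{V_0,\dots,V_m\}$ is a cover of $K$ by $m+1$ subcomplexes, and it remains only to show that each $V_i$ is strongly collapsible; this would give $\gscat K\leq m=\gscat L$, as desired.

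The heart of the argument, and the step I expect to be the main obstacle, is verifying strong collapsibility of $V_i$. A direct computation shows that a simplex $\tau$ not containing $v$ lies in $V_i$ if and only if $\tau\in U_i$, so that $V_i\setminus v=U_i$; moreover $v\in V_i$ precisely when $v'\in U_i$. If $v'\notin U_i$ then $V_i=U_i$ is already strongly collapsible. If $v'\in U_i$, I claim that $v$ is dominated by $v'$ inside $V_i$: indeed, let $\mu$ be a maximal simplex of $V_i$ containing $v$. Since $v$ is dominated by $v'$ in $K$, the set $\mu\cup\{v'\}$ is a simplex of $K$, and because $r(v)=v'$ one has $r(\mu\cup\{v'\})=r(\mu)\in U_i$, whence $\mu\cup\{v'\}\in V_i$. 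Maximality of $\mu$ then forces $v'\in\mu$. Hence $v$ is dominated by $v'$ in $V_i$, yielding an elementary strong collapse $V_i\seco V_i\setminus v=U_i$.

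Since $U_i\ctgcl\ast$ and a strong collapse is a strong equivalence, it follows that $V_i\ctgcl\ast$ in either case, so each $V_i$ is strongly collapsible. Consequently $\{V_0,\dots,V_m\}$ witnesses $\gscat K\leq\gscat L$, and chaining over the elementary steps of an arbitrary strong collapse completes the proof. The only delicate point is the domination verification, where one must use that enlarging a simplex through $v$ by $v'$ leaves its image under $r$ unchanged, so that it cannot escape the subcomplex $V_i$.
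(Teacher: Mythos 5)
Your proof is correct, and it shares the paper's overall skeleton---reduce to a single elementary strong collapse $K\seco L=K\setminus v$ and pull back a strongly collapsible cover $\{U_i\}$ of $L$ to the cover $\{r^{-1}(U_i)\}$ of $K$---but the heart of the argument, strong collapsibility of each preimage $r^{-1}(U_i)$, is handled in a genuinely different way. The paper works with contiguity chains: writing $i'$ and $r'$ for the restricted inclusion and retraction, it first checks that $1_{r^{-1}(U_i)}\ctg i'\circ r'$ (using that $\sigma\cup(i\circ r)(\sigma)$ is a simplex of $K$ lying in $r^{-1}(U_i)$), and then conjugates the chain $1_{U_i}\ctg\cdots\ctg c_w$ by $i'$ and $r'$ via \lemref{UNO} to conclude $1_{r^{-1}(U_i)}\ctgcl c_{i'(w)}$. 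You instead identify the preimage combinatorially: $r^{-1}(U_i)=U_i$ when $v'\notin U_i$, and otherwise $v$ remains dominated by $v'$ inside $r^{-1}(U_i)$, so that $r^{-1}(U_i)\seco U_i$ and strong collapsibility follows from invariance under strong equivalence. Your domination check is sound; the only point worth spelling out is that domination of $v$ by $v'$ in $K$ gives $\mu\cup\{v'\}\in K$ by passing to a maximal simplex of $K$ containing $\mu$, and your key observation---that adjoining $v'$ to a simplex through $v$ does not change its image under $r$, so the enlarged simplex stays in $r^{-1}(U_i)$---is exactly what makes the maximality argument close. Your route yields a sharper structural conclusion (the preimage elementarily strongly collapses onto $U_i$, not merely that it is strongly collapsible) and avoids manipulating contiguity sequences altogether; the paper's route is the one that generalizes, since it uses only the relations $r\circ i=1_L$ and $i\circ r\ctg 1_K$, i.e., the same functorial technique underlying the homotopy invariance of $\scat$, rather than the specific combinatorics of an elementary collapse.
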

\begin{proof}Without {loss} of generality we may assume that there is an elementary strong collapse $r\colon K\to
L=K \setminus v$ (see Definition \ref{DOMIN}). If $i\colon L
\subset K$ is the inclusion, then $r\circ i=1_L$ while $\sigma
\cup (i\circ r)(\sigma)$ is a simplex of $K$, for any simplex
$\sigma$ of $K$. Let $V$ be a strongly collapsible subcomplex of
$L$, that is, the identity $1_{V}$ is in the contiguity class of
some constant map $c_w\colon V \to V$. That means that there is a
sequence of maps $\varphi_i\colon V \to V$, $0\leq i\leq n$, such
that
$$1_{V}=\varphi_0\ctg \cdots \ctg \varphi_n=c_w.$$
{Let us denote} $r^\prime=r^{-1}(V) \to V$ the restriction of $r$ to $r^{-1}(V)$, with values in $V$.
Analogously denote $i^\prime\colon V \to r^{-1}(V)$ the inclusion (this is well defined because $r\circ i =1_{V}$).

Then, by  \lemref{UNO}, $\varphi_i\ctg \varphi_{i+1}$ implies $i^\prime\circ\varphi_i\circ r^\prime \ctg i^\prime\circ \varphi_{i+1}\circ r^\prime$. Clearly
$$i^\prime\circ \varphi_n\circ r^\prime=i^\prime\circ c_w\circ r^\prime =c_{i(w)}$$
is a constant map. On the other hand it is
$$i^\prime \circ \varphi_0\circ r^\prime=i^\prime\circ 1_{V}\circ r^\prime = i^\prime\circ r^\prime$$
and the latter map is contiguous to $1_{r^{-1}(V)}$. This is true
because if $\sigma$ is a simplex of $r^{-1}(V)$ then it is a
simplex of $K$, so $\sigma\cup (i\circ r)(\sigma)$ is a simplex of
$K$, which is contained in $r^{-1}(V)$ because $r\circ i=1_V$. But
$(i\circ r)(\sigma)=(i^\prime\circ r^\prime)(\sigma)$, so
$\sigma\cup (i^\prime\circ r^\prime)(\sigma)$ is a simplex of
$r^{-1}(V)$.

We have then proved that the constant map $c_w$ is in the same
contiguity class as the identity of $r^{-1}(V)$, which proves that
the latter is strongly collapsible.

Now, let $m=\gscat L$ and $\{V_0,\dots,V_m\}$ a cover of $L$ by
strongly collapsible subcomplexes. Then
$\{r^{-1}(V_0),\dots,r^{-1}(V_m)\}$ is a cover of $K$ by strongly
collapsible subcomplexes. This proves that $\gscat K\leq m$.
\end{proof}

\begin{remark}
Example 5.7 about finite spaces and the relations established in
Section 6 lead us to thinking that the inequality {in the}
previous Theorem is not an equality{. However,}  we have
not found an example of {a} complex simplicial {where} the
inequality is strict.
\end{remark}

Given any finite complex $K$, by successive elimination of
dominated vertices one obtains the core $K_0$ of the complex $K$,
which is the same for all the complexes in the homotopy class of
$K$. Then we have the following result (compare with
\corref{CORK0}.

\begin{cor}The geometric category $\gscat K_0$ of the core $K_0$ of the complex $K$ is the maximum value of $\gscat L$ among all the complexes $L$ which are strongly equivalent to $K$.
\end{cor}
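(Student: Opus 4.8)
The plan is to combine the preceding Theorem, which asserts that geometric category does not decrease under a strong collapse, with the uniqueness of the core within a strong homotopy class. The strategy splits into proving that $\gscat K_0$ is an upper bound for the family $\{\gscat L : L \sim K\}$ and then checking that this bound is actually attained.

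First I would fix an arbitrary complex $L$ strongly equivalent to $K$, so $L \sim K$. By the Theorem of Barmak and Minian quoted in Section~\ref{PRELIMSC}, $L$ and $K$ have the same strong homotopy type; and since the core is unique up to isomorphism and classifies strong homotopy type, the core of $L$ must be isomorphic to $K_0$. In particular $L$ admits a strong collapse $L \seco K_0'$ onto a subcomplex $K_0' \cong K_0$, obtained by successively deleting dominated vertices.

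Next I would apply the previous Theorem to this collapse. Since $K_0'$ is a strong collapse of $L$, the Theorem yields $\gscat K_0' \geq \gscat L$. Because $\gscat$ is invariant under simplicial isomorphism, we have $\gscat K_0' = \gscat K_0$, and hence $\gscat K_0 \geq \gscat L$. As $L$ was an arbitrary complex with $L \sim K$, this shows that $\gscat K_0$ is an upper bound for every value $\gscat L$ in the strong equivalence class.

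Finally I would observe that the bound is attained: the core $K_0$ is itself produced from $K$ by a strong collapse, so $K_0 \sim K$, and therefore $\gscat K_0$ genuinely belongs to the family $\{\gscat L : L \sim K\}$; being both an upper bound and a member, it is the maximum. The only point requiring care --- and the main, rather mild, obstacle --- is to ensure that \emph{every} complex in the strong homotopy class actually strong-collapses onto a copy of the single core $K_0$, rather than merely being abstractly strongly equivalent to it. This is precisely the content of the uniqueness statement for cores recalled before \corref{CORK0}, so no genuine difficulty remains once that result is invoked.
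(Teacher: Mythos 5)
Your proof is correct and follows exactly the argument the paper intends: it combines the preceding Theorem (geometric category does not decrease under strong collapse) with the uniqueness of the core, noting that every complex $L$ strongly equivalent to $K$ strong-collapses onto a copy of $K_0$, and that $K_0$ itself lies in the strong equivalence class. The paper leaves this reasoning implicit in the sentence preceding the Corollary, and your write-up supplies precisely those details, including the correct care about isomorphism invariance of $\gscat$.
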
}

\section{LS-category of finite spaces}
In this paper finite posets are considered as topological spaces
by themselves, and not as geometrical realizations of its
associated order complexes. That is, as {emphasized} in
\cite[p. 34]{BARMAK}, to say that a finite $T_0$-space X is
contractible is different from saying that $\vert \K(X)\vert$ is
contractible (although   $X$ and $\vert \K(X)\vert$ have the same
{\em weak} homotopy type). In this context we shall consider the
usual notion of {LS-category of topological spaces
\cite{CORNEA}. We have already introduced it in
\defiref{DEFLSCAT}.}

\subsection{Maximal elements}

The following result establishes an upper bound for the category of a finite poset. {Notice that there is not a result of this kind for non-finite topological spaces}

\begin{prop}\label{MAX} Let $M(X)$ be the number of maximal elements of $X$. Then  $\cat X\leq \gcat X<M(X)$.
\end{prop}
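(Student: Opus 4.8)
The plan is to cover $X$ by open sets, one around each maximal element, and show that $M(X)-1$ of these can be made categorical (indeed geometrically categorical) by invoking \corref{POINT}. First I would recall that in a finite $T_0$-space the minimal open set $U_x=\{y:y\leq x\}$ is contractible for every $x$ (\corref{POINT}), since $x$ is a maximum of $U_x$. The crucial observation is that every point of $X$ lies below some maximal element: because $X$ is finite, any $y$ can be extended upward through the partial order until a maximal element $p$ is reached, so $y\in U_p$. Hence if $p_0,\dots,p_{r}$ are the maximal elements of $X$ (so $M(X)=r+1$), the open sets $U_{p_0},\dots,U_{p_r}$ already cover $X$. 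Each $U_{p_i}$ is contractible in itself, hence contractible inside $X$, so this is a cover by $M(X)$ geometrically categorical open sets, giving the weaker bound $\gcat X\le M(X)-1$.

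To sharpen the count from $M(X)$ pieces down to $M(X)-1$ pieces (so that the \emph{strict} inequality $\gcat X<M(X)$ holds, equivalently $\gcat X\le M(X)-1$), I would merge two of the sets. The idea is to absorb one maximal element's neighborhood into the remaining cover. Concretely, fix a maximal element $p_r$ and consider the open set $V=X\setminus\{z: z<p_r\text{ and }z\notin U_{p_i}\text{ for }i<r\}$, or more cleanly, enlarge one of the remaining $U_{p_i}$ to still be contractible while covering what $U_{p_r}$ covered. The cleanest route is probably to show that $U_{p_{r-1}}\cup U_{p_r}$, or the open star of an edge joining two maximal elements when such an edge exists, can be arranged to be contractible; but since maximal elements are incomparable, $U_{p_{r-1}}\cap U_{p_r}$ consists of common lower bounds, and one must check this union deformation-retracts to a point. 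I expect the honest argument instead keeps the $U_{p_i}$ for $i<r$ and shows the leftover set $U_{p_r}\setminus\bigcup_{i<r}U_{p_i}$ can be distributed among the others, using the homotopy criterion of \propref{HOMOTOPY} ($f\simeq g$ via the generated relation from $\le$) to verify categoricity of the enlarged pieces.

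The main obstacle will be exactly this merging step: proving that $M(X)-1$ open categorical sets suffice rather than the obvious $M(X)$. The subtlety is that simply taking $U_{p_i}$ for $i<r$ need not cover $X$ (points lying only below $p_r$ are omitted), so one must either enlarge those sets while preserving categoricity or exploit a specific structural fact about how the top element $p_r$ sits. I would handle this by fixing $y_0\in U_{p_r}$ and defining, for each point $z<p_r$ not covered by the others, a categorical open set obtained by adjoining $z$ together with $U_{p_r}$ to one existing piece, then checking contractibility of the result via \propref{HOMOTOPY}; the homotopy is built from the comparisons $z\le p_r$ and the maximum property of \corref{POINT}. The bookkeeping that the enlarged sets remain open and that their inclusions are still homotopic to constants is where the real work lies, and it is the step I would write out most carefully.
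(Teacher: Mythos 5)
Your first paragraph already proves the proposition, and it is exactly the paper's proof: the minimal open sets $U_p$ of the maximal elements $p$ are contractible by \corref{POINT}, and they cover $X$ because in a finite poset every point lies below some maximal element. Under the convention of \defiref{DEFLSCAT} (and the parallel definition of $\gcat$ in \secref{LSCAT}), a cover by $m+1$ open sets that are contractible in themselves gives $\gcat X\leq m$; with $m+1=M(X)$ sets this reads $\gcat X\leq M(X)-1$, which for integers is precisely the strict inequality $\gcat X<M(X)$. Combined with $\cat X\leq \gcat X$, which holds by definition, you are done at that point.

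Everything after that rests on an off-by-one confusion and should be deleted. You treat ``$\gcat X<M(X)$, equivalently $\gcat X\leq M(X)-1$'' as if it required a cover by $M(X)-1$ open sets, but under the paper's convention such a cover would prove $\gcat X\leq M(X)-2$, a strictly stronger statement --- and one that is false in general. In \examref{Xop} the space $X\op$ has three maximal elements and $\cat X\op=2$, so $\gcat X\op\geq 2=M(X\op)-1$ and no cover by $M(X\op)-1=2$ categorical (let alone contractible-in-themselves) open sets can exist; the core $X_0$ of \examref{X-COREX}, with three maximal elements and $\gcat X_0=2$, is another counterexample. So the ``merging'' step that you identify as the main obstacle and plan to write out most carefully is not an obstacle at all: it is an attempt to prove more than the proposition asserts, and it cannot succeed. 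This also explains why your sketched merges felt delicate --- unions $U_p\cup U_q$ for incomparable maximal elements $p,q$ are typically not contractible, as the paper itself exploits in these examples.
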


\begin{proof}If {$x\in X$} is a maximal element then $U_x$ is contractible (\corref{POINT}), so maximal elements determine  a categorical cover.
\end{proof}

In particular, a space with a maximum is contractible, as it is well known.

{It is also known that if $X$ has a unique {\em minimal} element
$x$ then $X$ is contractible, because the identity is homotopic to
the constant map $c_x$. Even more, a space $X$ is contractible if
and only if its {\em opposite} space $X\op$ (that is, reverse
order) is contractible. However, the {LS-categories} of $X$ and
$X\op$ may not {coincide}, as the following Example shows. This is
a quick way to check that $X$ and $X\op$ are not homotopy
equivalent, even if they always are {\em weak homotopy}
equivalent.}

{\begin{example}\label{Xop} In \figref{FigureA} it is clear
that $\cat X=1$ because $X$ is not contractible and $\cat X<2$ by
\propref{MAX}. However $\cat X\op=2$ since $\cat X\op<3$ and it is
easy to check that the unions of any two open sets $U_{y_i}\cup
U_{y_j}$ are not contractible.

\begin{figure}[htbp]
\begin{center}
\includegraphics[width=0.8\textwidth]{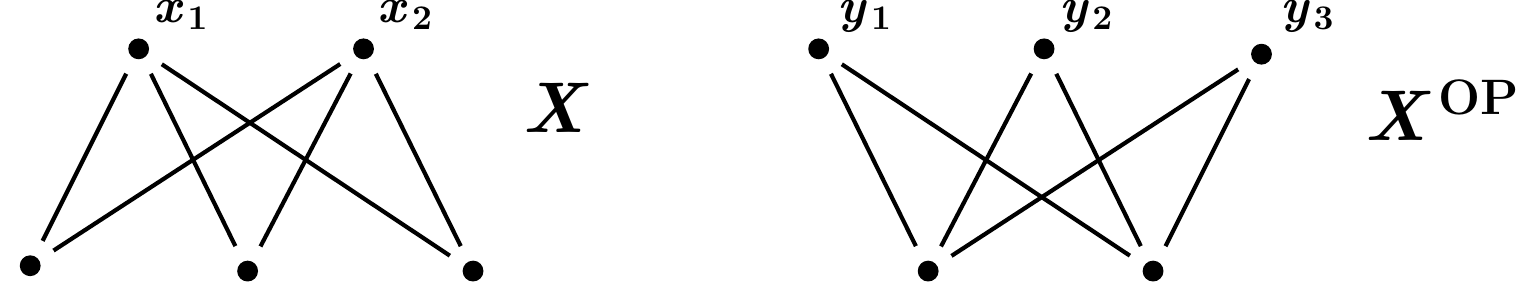}
\end{center}
\caption{A space where $\cat X \neq \cat X\op$.}
\label{FigureA}
\end{figure}
Notice that for any categorical open cover, the open sets $U_x$ corresponding to maxima must be contained in some element of the cover.
\end{example}}

\subsection{Geometric category}As {it was} pointed out in Section \ref{PRELIM}, another homotopy invariant, $\Cat X$, can be defined as the least geometric category of all spaces in the homotopy type of $X$. A peculiarity of finite topological spaces is that it is also possible to consider the {\em maximum} value of $\gcat$  in each  homotopy type. We shall prove that this maximum is attained in the so called  {\em core}  space of $X$, a notion introduced by Stong \cite{STONG}.

The next definition is equivalent to that of linear and collinear
points in  \cite [Th.~2]{STONG}, called {\em beat points} by other
authors \cite{BARMAK,BARMAKMINIAN,MAY}.

\begin{defi}\label{BEAT}Let $X$ be a finite topological space. A point $x_0\in X$ is a {\em beat point} if there exists another point $x_0^\prime\neq x_0$
satisfying the following conditions:
\begin{enumerate}
\item
If $x_0<y$ then $x_0^\prime\leq y$;
\item
if $x<x_0$ then $x\leq x_0^\prime$.
\item
$x_0$ and $x_0^\prime$ are comparable.
\end{enumerate}
\end{defi}

{In other words,  a beat point  covers exactly  one point or  it is covered
by exactly one point. \figref{FigureG} shows a beat point with $x_0\leq x_0^\prime$.}

\begin{figure}[htbp]
\begin{center}
\includegraphics[width=0.35\textwidth]{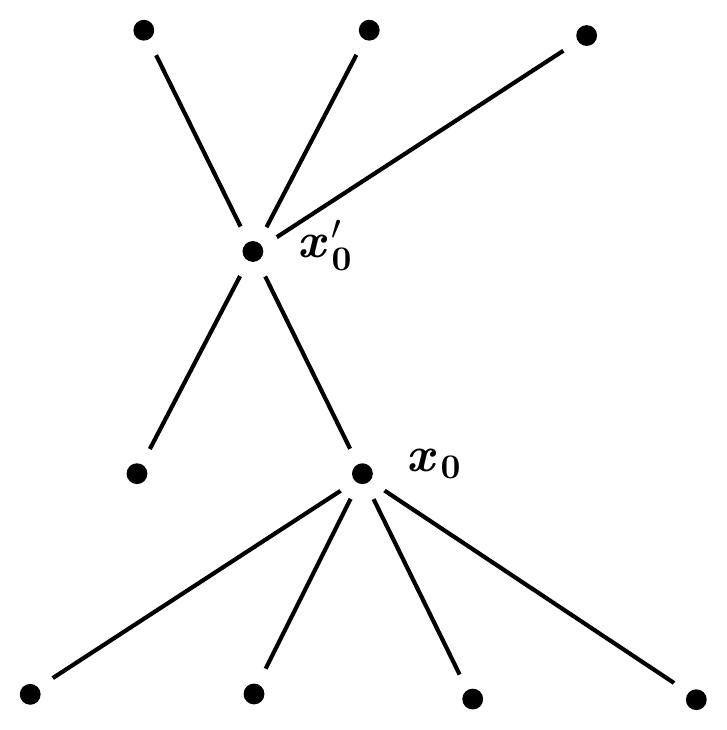}
\end{center}
\caption{An {\em up} beat point $x_0$}
\label{FigureG}
\end{figure}

\begin{prop}\label{RETRACT} If $x_0$ is a   beat point of $X$ then the map $r\colon X\to X\setminus x_0$ given by $r(x)=x$ if $x\neq x_0$ and $r(x_0)=x_0^\prime$, is continuous and satisfies $r\circ i=\id$ and $i\circ r \simeq \id$.
\end{prop}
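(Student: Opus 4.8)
The plan is to verify the three assertions in turn: that $r\circ i=\id$, that $r$ is continuous, and that $i\circ r\simeq\id_X$. The first is immediate, since $r$ restricts to the identity on $X\setminus x_0$, so that $r\circ i=\id_{X\setminus x_0}$ with no use of the beat-point hypotheses at all.

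For continuity I would use the characterization recalled earlier that a map between finite $T_0$-spaces is continuous if and only if it is order-preserving. Thus I must check that $a\leq b$ in $X$ implies $r(a)\leq r(b)$ in $X\setminus x_0$, and I would argue by cases according to whether $a$ or $b$ equals $x_0$. When neither equals $x_0$ the inequality is trivial, since $r$ is the identity there. If $a=x_0\leq b$ with $b\neq x_0$, then $x_0<b$, and condition~(1) of \defiref{BEAT} gives $x_0^\prime\leq b$, that is, $r(x_0)\leq r(b)$. Symmetrically, if $a<x_0=b$ with $a\neq x_0$, condition~(2) gives $a\leq x_0^\prime$, that is, $r(a)\leq r(x_0)$. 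The remaining case $a=b=x_0$ is trivial. Note that $r$ does indeed take values in $X\setminus x_0$, because $x_0^\prime\neq x_0$.

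For the homotopy $i\circ r\simeq\id_X$ I would appeal to \propref{HOMOTOPY}: it suffices to compare $i\circ r$ and $\id_X$ pointwise under the relation $\leq$ between maps. These two maps agree on every point $x\neq x_0$, while at $x_0$ we have $(i\circ r)(x_0)=x_0^\prime$ and $\id_X(x_0)=x_0$. By condition~(3) the points $x_0$ and $x_0^\prime$ are comparable, so either $\id_X\leq i\circ r$ (when $x_0\leq x_0^\prime$) or $i\circ r\leq\id_X$ (when $x_0^\prime\leq x_0$); in either case a single step of the relation generating homotopy connects the two maps, whence $i\circ r\simeq\id_X$.

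In fact there is no genuine obstacle here: the entire content of the statement is packaged into the three defining conditions of a beat point, which are tailored precisely so that conditions~(1)--(2) force order-preservation while condition~(3) forces the one-step homotopy. The only point requiring slight care is the bookkeeping in the continuity argument, namely checking that the beat-point definition covers exactly the comparabilities involving $x_0$ that can arise; but this is a finite and elementary case check rather than a real difficulty.
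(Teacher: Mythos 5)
Your proof is correct and complete: checking that $r$ is order-preserving (hence continuous) via conditions (1)--(2) of \defiref{BEAT}, and then obtaining $i\circ r\simeq\id_X$ from the single comparison $\id_X\leq i\circ r$ or $i\circ r\leq\id_X$ supplied by condition (3) together with \propref{HOMOTOPY}, is exactly the standard argument. The paper itself states this proposition without proof (it is a classical result going back to Stong \cite{STONG}), and your write-up is precisely the proof the authors leave implicit.
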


\begin{cor}\label{OBSERV}If $f\colon X \to X$ is a continuous map such that $f(x_0)=x_0$, then the map $g$ which equals $f$ on $X\setminus x_0$ but  sends $x_0$ onto $x_0^\prime$ is homotopic to $f$.
\end{cor}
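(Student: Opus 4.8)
The plan is to deduce the statement from \propref{HOMOTOPY}, which identifies homotopy of maps between finite spaces with the equivalence relation generated by the pointwise order $\leq$ on maps. The key observation is that $f$ and $g$ are \emph{comparable} for this order. Indeed, they agree at every point of $X\setminus x_0$, while at $x_0$ they take the values $f(x_0)=x_0$ and $g(x_0)=x_0^\prime$, which are comparable by condition~(3) of \defiref{BEAT}. Consequently $f\leq g$ when $x_0\leq x_0^\prime$ and $g\leq f$ when $x_0^\prime\leq x_0$, with equality at all remaining points. A single step of the generating relation therefore places $f$ and $g$ in the same class, so \propref{HOMOTOPY} yields $f\simeq g$ at once. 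This is exactly the computation behind $i\circ r\simeq\id$ in \propref{RETRACT}, now performed with $f$ in place of the identity.

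The step that genuinely uses the beat-point hypotheses --- and the one I expect to be the main obstacle --- is to check that $g$ is a \emph{continuous}, i.e.\ order-preserving, self-map of $X$, since the assertion $f\simeq g$ only makes sense once $g$ is known to be a map. I would verify order-preservation comparison by comparison, the only nontrivial pairs being those involving $x_0$. Treating the \emph{up} beat point $x_0\leq x_0^\prime$ of \figref{FigureG} (the \emph{down} case being symmetric, via condition~(2) and passage to the opposite order), a relation $a<x_0$ gives $g(a)=f(a)\leq f(x_0)=x_0\leq x_0^\prime=g(x_0)$ with no further input, while a relation $x_0<b$ reduces the requirement to $x_0^\prime\leq f(b)=g(b)$; here one has $x_0=f(x_0)\leq f(b)$, and condition~(1) of \defiref{BEAT} upgrades this to $x_0^\prime\leq f(b)$ as soon as $x_0<f(b)$. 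The borderline configuration in which some $b>x_0$ satisfies $f(b)=x_0$ is the delicate point, and it is precisely comparisons of this kind that force one to appeal to conditions~(1) and~(2) of \defiref{BEAT}; this is where the structure of a beat point is essential.

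Once continuity of $g$ has been secured, nothing further is required: the comparability of $f$ and $g$ recorded in the first paragraph, inserted into \propref{HOMOTOPY}, immediately delivers the homotopy $f\simeq g$. I would write out only the up beat point case in full, the down beat point case being obtained from it by reversing the order throughout.
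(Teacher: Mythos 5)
Your first paragraph is the right argument, and it is surely the intended one (the paper states this Corollary without a written proof): once $g$ is known to be a continuous map, $f$ and $g$ differ only at $x_0$, where their values $x_0$ and $x_0'$ are comparable by condition~(3) of \defiref{BEAT}, so $f\leq g$ or $g\leq f$, and \propref{HOMOTOPY} gives $f\simeq g$ in a single step of the generating relation.

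The gap is exactly where you place it, but it cannot be closed: continuity of $g$ is genuinely false in this generality, so no appeal to conditions~(1) and~(2) of \defiref{BEAT} will rescue the borderline case you leave open. Concretely, let $X$ be the three-point chain $x_0 < x_0' < m$, so that $x_0$ is an up beat point with associated point $x_0'$, and let $f$ be the constant map at $x_0$, which is continuous and fixes $x_0$. Then $g(x_0)=x_0'$ while $g(x_0')=f(x_0')=x_0$, so $g$ reverses the relation $x_0<x_0'$: it is not order-preserving, hence not continuous, and the assertion $f\simeq g$ does not even make sense. Thus the Corollary is correct only if continuity of $g$ is taken as part of the hypotheses (it holds, for instance, whenever $f^{-1}(x_0)=\{x_0\}$, by the computation in your second paragraph), and that is how the paper actually applies it: in the proof of \thmref{SUBE} the continuity of the modified maps $\varphi_k'$ is verified separately, and the Corollary is invoked with $f=\id$, for which $g=i\circ r$ is continuous by \propref{RETRACT}. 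Your write-up should state this extra hypothesis explicitly; once it is in place, your first paragraph alone is a complete proof, and the second paragraph should be replaced by the sufficient condition above rather than by the claim that the beat-point axioms yield continuity in general.
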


Since $X\setminus x_0$ is a deformation retract of $X$
(\propref{RETRACT}) it follows that $\cat X\setminus x_0=\cat X$.
However $\gcat$ is not a homotopical invariant. {The next
Theorem shows that geometrical category increases when a beat
point is erased.}

 \begin{theorem}\label{SUBE}If $x_0$ is a beat point of $X$ then $\gcat X\setminus x_0\geq \gcat X$. \end{theorem}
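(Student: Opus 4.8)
The plan is to imitate the proof of the simplicial statement that a strong collapse increases $\gscat$, replacing contiguity by homotopy of maps between finite spaces as characterized in \propref{HOMOTOPY}. Let $r\colon X\to L\defeq X\setminus x_0$ be the retraction of \propref{RETRACT} and $i\colon L\to X$ the inclusion, so that $r\circ i=\id_L$ and $i\circ r\simeq \id_X$. Starting from a cover of $L$ by $\gcat L+1$ subspaces that are contractible in themselves, I would pull each one back along $r$ and show the preimage is again contractible in itself; since preimages of a cover form a cover of $X$, this yields $\gcat X\le \gcat L$, which is the desired inequality.

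So fix a subspace $V\subset L$ that is contractible in itself and consider $W\defeq r^{-1}(V)$. First I observe that $V\subset W$ (if $x\in V$ then $r(x)=x\in V$), so the inclusion $i'\colon V\to W$ is defined, and the corestriction $r'\colon W\to V$ of $r$ is defined and continuous; moreover $r'\circ i'=\id_V$. The next step, which is the heart of the argument, is to prove $i'\circ r'\simeq \id_W$. The map $i'\circ r'$ is the identity on $W\setminus x_0$ and, if $x_0\in W$, sends $x_0$ to $x_0'$; note that $x_0\in W$ forces $x_0'=r(x_0)\in V\subset W$, so the witness point is available inside $W$. Since $x_0$ and $x_0'$ are comparable (\defiref{BEAT}), the map $i'\circ r'$ is pointwise comparable to $\id_W$ (it lies above it when $x_0\le x_0'$ and below it when $x_0'\le x_0$), hence $i'\circ r'\simeq \id_W$ by \propref{HOMOTOPY}.

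Finally I would transport the contraction. Writing $\id_V\simeq c_w$ for some $w\in V$, and using that composing with the order-preserving maps $i'$ and $r'$ preserves the relation $\le$ between maps and therefore homotopy, I obtain
\begin{equation*}
\id_W\simeq i'\circ r'=i'\circ \id_V\circ r'\simeq i'\circ c_w\circ r'=c_w,
\end{equation*}
where $c_w$ denotes the constant map of $W$ at $w\in V\subset W$. Thus $W=r^{-1}(V)$ is contractible in itself. Applying this to each member of a cover $\{V_0,\dots,V_m\}$ of $L$ realizing $m=\gcat L$ gives a cover $\{r^{-1}(V_0),\dots,r^{-1}(V_m)\}$ of $X$ by subspaces contractible in themselves, whence $\gcat X\le m=\gcat(X\setminus x_0)$.

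The only delicate point is the middle step $i'\circ r'\simeq \id_W$: one must ensure that the beat-point witness $x_0'$ survives inside the preimage $W$, which it does precisely because $x_0\in W$ forces $r(x_0)=x_0'\in V$. Everything else is the routine transport of homotopies through the order-preserving maps $i'$ and $r'$, exactly as in the simplicial case.
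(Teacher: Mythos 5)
Your proof is correct, and it reaches the paper's conclusion by a genuinely cleaner route. The paper builds the enlarged cover of $X$ by hand: it splits into cases according to whether $x_0$ is maximal and whether a given $U_i$ contains $x_0'$ or some $y>x_0$, sets $U_i'=U_i\cup\{x_0\}$ or $U_i'=U_i$ accordingly, then separately verifies that each $U_i'$ is open in $X$, that $x_0$ is still a beat point of $U_i'$ with the same witness $x_0'$, and that the contraction transports via explicitly modified maps $\varphi_k'$ (which are precisely your $i'\circ\varphi_k\circ r'$). Your formulation via preimages under the retraction $r$ of \propref{RETRACT} --- the exact strategy the paper itself uses for the simplicial analogue on strong collapses and $\gscat$ --- produces the same sets ($r^{-1}(U_i)=U_i\cup\{x_0\}$ exactly when $x_0'\in U_i$, and $=U_i$ otherwise) but gets all the bookkeeping for free: the cover property is automatic, and the case analysis disappears. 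One point you should make explicit: $\gcat$ in this paper is defined using \emph{open} subsets contractible in themselves, and you only say ``subspaces''; the fix is one line, since $r^{-1}(V)$ is open in $X$ because $r$ is continuous and $V$ is open in $X\setminus x_0$ --- and this is in fact where your approach most visibly improves on the paper's, which must argue openness of each $U_i'$ directly. Your treatment of the delicate step $\id_W\simeq i'\circ r'$ via comparability of $x_0$ and $x_0'$ (\defiref{BEAT} and \propref{HOMOTOPY}) is sound and replaces the paper's appeal to \corref{OBSERV}; the rest of your transport argument matches the paper's.
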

\begin{proof}Let $U_0,\dots,U_n$ be a cover of $X\setminus x_0$ such that each $U_i$ is an open subset of $X\setminus x_0$, contractible in itself.  We shall define a  cover $U_0^\prime,\dots,U_n^\prime$ of $X$ as follows.

{Let $x_0^\prime$ be a point associated to the beat point $x_0$ as in \defiref{BEAT}.} For each $U_i$, $0\leq i\leq n$, we take:
{\begin{enumerate}
\item
If $x_0$ is a maximal element of $X$ then $x_0^\prime\leq x_0$ and
\begin{enumerate}
\item
there is some $U_i$ which contains $x_0^\prime$, so we take $U_i^\prime=U_i\cup\{x_0\}$;\label{A1}
\item
for the other $U_j$'s, if any, we take $U_j^\prime=U_j$.\label{A2}
\end{enumerate}
\item
If $x_0$ is not a maximal element, then it happens that
\begin{enumerate}
\item
for some of the $U_i$'s  there exists $y\in U_i$ such that $x_0< y$; then we take $U_i^\prime =U_i\cup \{x_0\}$;\label{B1}
\item
for the other $U_j$'s, if any, which satisfy $x<x_0$ for all $x\in
U_j$, we take  $U_j^\prime=U_j$.\label{B2}
\end{enumerate}
\end{enumerate}}
Notice that condition \eqref{B1} implies that $x_0^\prime\in U_i$
because $x_0<y$  implies $x_0^\prime\leq y$ (by definition of beat
point), and $U_i$ is an open subset of $X\setminus x_0$, so the
basic open set $U_y$ is contained in $U_i$.

We shall {verify} that each $U_i^\prime$ is an open subset of
$X$.

Let $y\in U_i^\prime$ and $x\leq y$.  If $x,y\neq x_0$ then $x\in
U_i\subset U_i^\prime$ because $U_i$ is an open subset of
$X\setminus x_0$. In cases \eqref{A1} and \eqref{B1}, if $y=x_0$
and $x<x_0$ then $x\leq x_0^\prime$ by definition of beat point,
and we know that $x_0^\prime\in U_i$, {so we conclude that $x\in
U_i\subset U_i^\prime$. Finally,} if $x=x_0<y$ then  $x\in
U_i^\prime=U_i\cup \{x_0\}$. In cases \eqref{A2} and \eqref{B2},
neither $x_0\in U_i^\prime$ nor $x=x_0<y\in U_i$ are possible.

Moreover it is easy to check that $x_0$ is still a beat point of $U_i^\prime$, with the same  {associated point} $x_0^\prime$.

Let $U=U_i$ for some $i\in\{0,\dots,n\}$; {since $U$ is
strongly collapsible}, the identity map $\id \colon U\to U$ is
homotopic to some constant map $c\colon U\to U$.  {By
\propref{HOMOTOPY}, that means that} there is a sequence
$\id=\varphi_0, \cdots, \varphi_n=c$ of maps $\varphi_k\colon U
\to U$ such that each consecutive pair satisfies either
$\varphi_i\leq \varphi_{i+1}$ or $\varphi_i\geq \varphi_{i+1}$.

We shall {prove} that the identity of $U^\prime=U_i^\prime$ is
homotopic to a cons\-tant map. Obviously it suffices to consider
cases \eqref{A1} and \eqref{B1}. Define $\varphi_k^\prime\colon
U^\prime \to U^\prime$ as follows: $
\varphi_k^\prime(x)=\varphi_k(x)$ if $x\neq x_0$ and $
\varphi_k^\prime(x_0)=\varphi_k(x_0^\prime)$. Thus the maps
$\varphi_k^\prime$ are continuous because $\varphi_k$ preserves
the order, hence $\varphi_k^\prime$ preserves the order too, as it
is easy to check. Moreover if $\varphi_i\leq \varphi_{i+1}$ then
$\varphi_i^\prime\leq \varphi_{i+1}^\prime$ (analogously for
$\geq$). So we have that $\varphi_0^\prime \sim \varphi_n^\prime$.
{Now}, the map $\varphi_n$  is constant,  so it is
$\varphi_n^\prime$. Finally, the map $\varphi_0^\prime$ is not the
identity, but it is homotopic to the identity by \lemref{OBSERV}.

Finally is is easy to check that the open sets $U_0^\prime,
,\dots,U_n^\prime$ form a cover of $X$. Since they are
contractible, it follows that $\gcat X\leq n$.
\end{proof}

 After a finite number of steps,  by successive elimination of all the beat points,
 {a {\em core} or {\em minimal} space $X_0$  is obtained},  which is in the same homotopy class as $X$. It is known that this core space is unique up to homeomorphism {\cite[Th.~4]{STONG}}.

\begin{example}\label{X-COREX}
The example in \figref{FigureH}, communicated to the authors by J.
Barmak and G. Minian, shows that the inequality of \thmref{SUBE}
can be strict.

\begin{figure}[htbp]
\begin{center}
\includegraphics[width=0.8\textwidth]{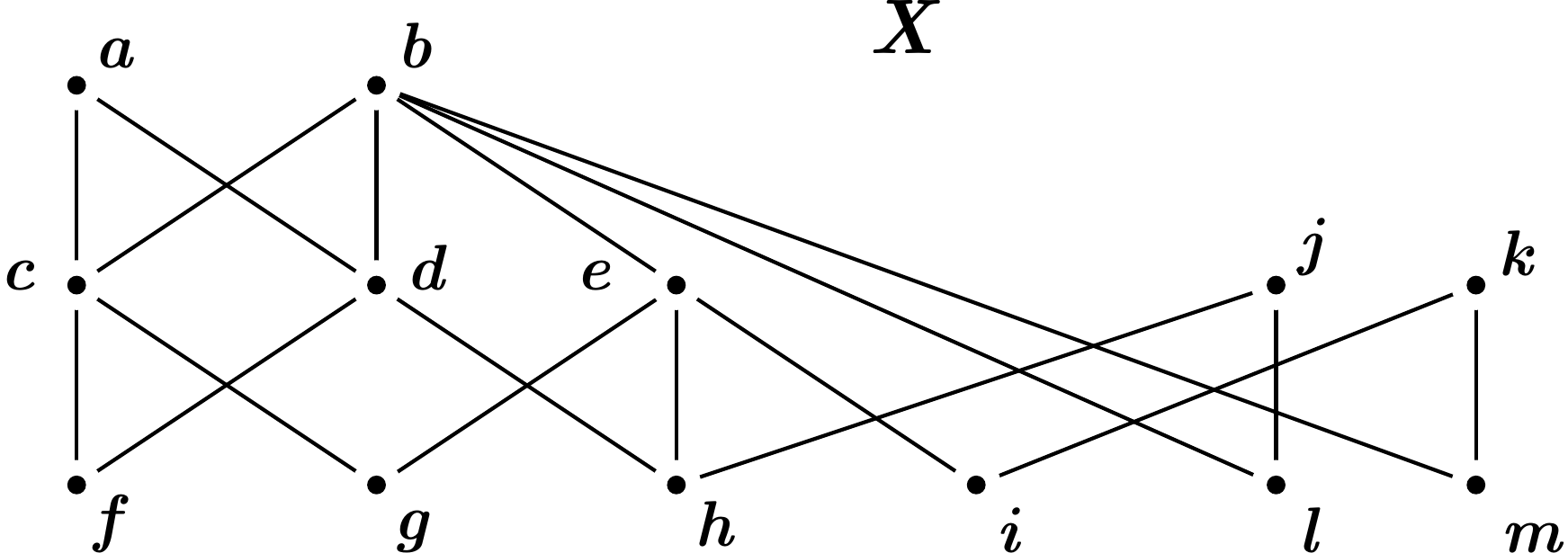}
\caption{A finite space $X$ with $\gcat X_0 > \gcat X$.}
\label{FigureH}
\end{center}
\end{figure}

On the one hand, since $X$ is not contractible, $\gcat X\geq 1$.
In addition,   $\{U_a\cup U_b, U_e\cup U_j\cup U_k\}$ is a cover
of $X$ by open subsets which are contractible in themselves, so we
conclude that $\gcat X=1$.

On the other hand, let us consider the core $X_0$
(\figref{FigureI}) of the finite space $X$.

\begin{figure}[htbp]
\begin{center}
\includegraphics[width=0.5\textwidth]{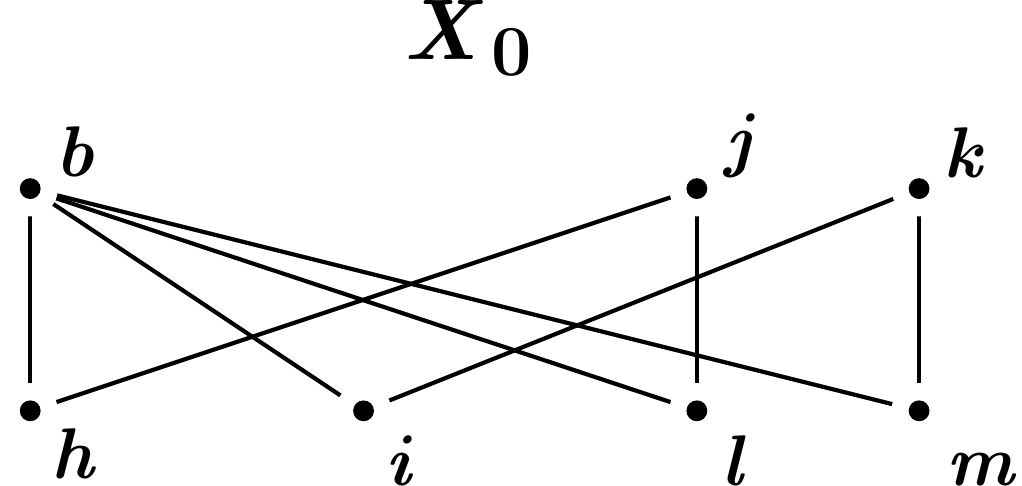}
\caption{The core $X_0$ of the finite space $X$ in
\figref{FigureH}.} \label{FigureI}
\end{center}
\end{figure}

We can observe that $\{U_b, U_j, U_k\}$ is a cover of $X_0$ by
open subsets which are contractible in themselves, so $\gcat
X_0\leq 2$. Finally, we can prove that it is not possible to cover
$X_0$ with just two subsets: since each open subset of the cover
has to be union of the basic open subsets $U_b, U_j, U_k$ and we
note that the unions of two of these open sets are not
contractible, we conclude that there is no cover with two
elements. Thus $\gcat X_0=2$.

Therefore the inequality of  \thmref{SUBE} is strict for this
example.

\end{example}

\begin{remark} Notice that $X_0$ is a new example of a space whose geometrical category does not coincide with its LS-category.  It is also a new example showing that the geometrical category of
topological spaces is not a homotopy invariant. A classical
example is due to Fox \cite{FOX}. Other examples are given by
Clapp and Montejano in \cite{CM} , see also Section 3.3 of
\cite{CORNEA}.
\end{remark}

 {The next Corollary is a consequence of \thmref{SUBE}.}

\begin{cor}The geometric category $\gcat X_0$ of the core space $X_0$ of $X$ equals the maximum of the geometrical categories in its homotopy class.
\end{cor}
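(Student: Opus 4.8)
The plan is to package Theorem~\ref{SUBE} as a monotonicity statement and combine it with Stong's uniqueness of the core. First I would check that $X_0$ itself belongs to the homotopy class of $X$: it is obtained from $X$ by successively erasing beat points, and each such erasure is a deformation retraction by \propref{RETRACT}, so $X\simeq X_0$. Hence $\gcat X_0$ is one of the numbers in the set $\{\gcat Y : Y\simeq X\}$, and the maximum of this set is at least $\gcat X_0$.

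Next I would prove the reverse inequality $\gcat Y\leq \gcat X_0$ for every finite $T_0$-space $Y$ in the homotopy class of $X$. Since $Y$ is finite, repeated removal of beat points terminates after finitely many elementary steps
$$Y=Y_{(0)}\to Y_{(1)}\to\cdots\to Y_{(k)}=Y_0,$$
where $Y_{(j+1)}=Y_{(j)}\setminus x_j$ for some beat point $x_j$ of $Y_{(j)}$ and $Y_0$ is the core of $Y$. Applying \thmref{SUBE} at each stage yields the chain
$$\gcat Y=\gcat Y_{(0)}\leq \gcat Y_{(1)}\leq\cdots\leq \gcat Y_{(k)}=\gcat Y_0.$$

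To close the argument I would identify $\gcat Y_0$ with $\gcat X_0$. By Stong's theorem \cite[Th.~4]{STONG} the core of a finite space is unique up to homeomorphism and, moreover, two finite spaces are homotopy equivalent if and only if their cores are homeomorphic; since $Y\simeq X$, this forces $Y_0\cong X_0$. As $\gcat$ is a topological invariant, it is in particular preserved by homeomorphisms, so $\gcat Y_0=\gcat X_0$. Combining with the previous display gives $\gcat Y\leq \gcat X_0$ for every $Y\simeq X$, and together with the first paragraph this shows that $\gcat X_0=\max\{\gcat Y : Y\simeq X\}$.

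The only delicate point is the bookkeeping in the second step: one must ensure that an \emph{arbitrary} member $Y$ of the homotopy class really reduces to its core by erasing one beat point at a time, so that \thmref{SUBE} (stated for a single elementary removal) applies at each stage, and that the endpoint of this process is genuinely the homeomorphism type $X_0$ rather than merely some minimal space. Both facts are guaranteed by the existence and uniqueness of cores of finite spaces, so no substantial obstacle remains; the corollary is essentially a monotonicity-plus-invariance repackaging of \thmref{SUBE}.
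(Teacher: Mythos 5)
Your proposal is correct and follows essentially the same route the paper intends: the paper states this corollary as a direct consequence of Theorem~\ref{SUBE}, with the preceding paragraph supplying exactly the two ingredients you use (any finite space reduces to its core by successive removal of beat points, and the core is unique up to homeomorphism across the homotopy class, by Stong). Your write-up simply makes explicit the iteration of Theorem~\ref{SUBE} along the chain $Y=Y_{(0)}\to\cdots\to Y_0$ and the identification $Y_0\cong X_0$, which is precisely the bookkeeping the paper leaves to the reader.
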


\begin{example}\label{X-cat-gcat}
 \figref{FigureB} shows a space where $\cat X=1$ while $\gcat X=2$. {Let us see this.} {Since $X$ is not contractible, $\gcat X\geq \cat X\geq 1$.} Moreover,   $\{U_{x_1}\cup U_{x_4}, U_{x_2}\cup U_{x_3}\}$ is a cover of $X$ by categorical open subsets. {So we conclude that $\cat X=1$.}

 \begin{figure}[htbp]
\begin{center}
\includegraphics[width=0.6\textwidth]{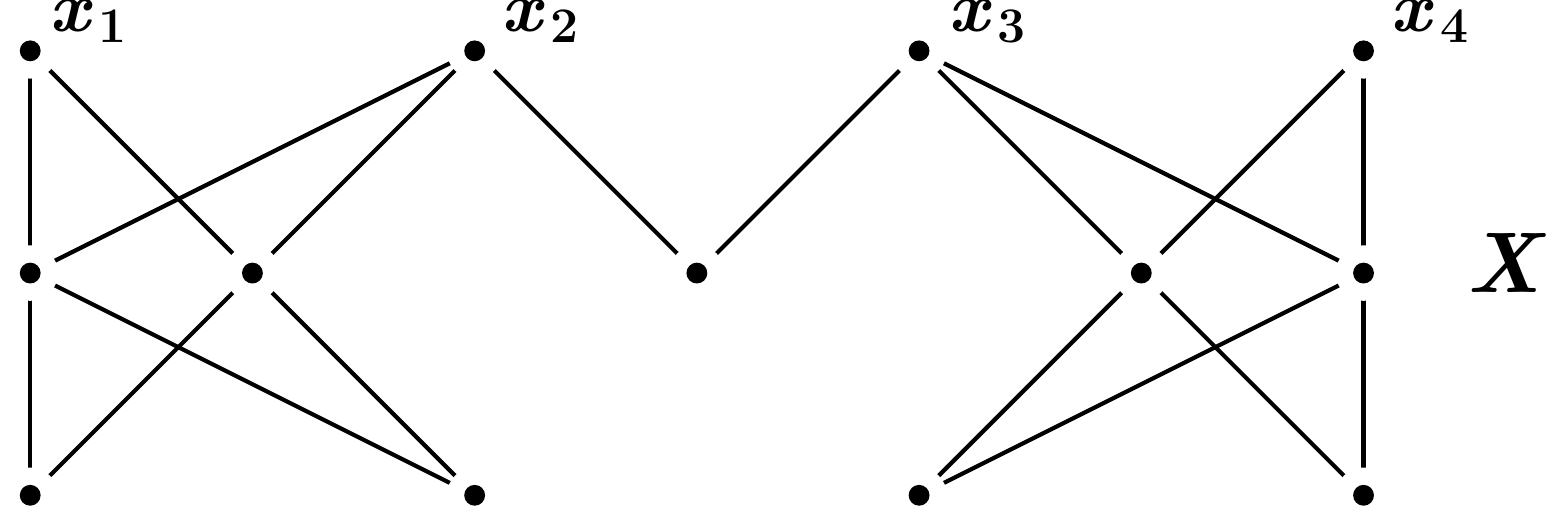}
\end{center}
\caption{A space with $\cat X=1$ but $\gcat X=2$.}
\label{FigureB}
\end{figure}

On the other hand, $\{U_{x_1}, U_{x_2}\cup U_{x_3}, U_{x_4}\}$ is
a cover of $X$ by open subsets which are contractible in
themselves, so $\gcat X\leq 2$. Finally, we can prove that there
is no such kind of cover of $X$ with just two subsets: since each
open subset of the cover  has to be union of basic open subsets
$U_{x_i}$, where $x_i$ are maximal points, and taking into account
that the unique union of $U_{x_i}$'s that is contractible is
$U_{x_2}\cup U_{x_3}$, we conclude that it is not possible to get
a cover with two elements. Thus $\gcat X=2$.

\end{example}

\section{Relation between categories}
We study the relation between the category of a finite $T_0$-poset $X$ and {the simplicial category} of the associated order complex $\K(X)$. {Analogously, a comparison will be done} between the category of a simplicial complex $K$ and its induced Hasse diagram $\chi (K)$. {The corresponding definitions were given in Section \ref{ASSOC}.}

\begin{prop}\label{AAA}{Let $X$ be  a finite poset and $\K(X)$ its associated order complex. Then $\scat \K(X)\leq \cat X$.}
\end{prop}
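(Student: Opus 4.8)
The plan is to transfer a categorical open cover of the finite poset $X$ to a categorical cover of the order complex $\K(X)$ by applying the functor $\K$ to the covering data. The key observation is that $\K$ takes continuous maps to simplicial maps, homotopic maps to maps in the same contiguity class (\propref{BARMAK212}(1)), and behaves well with respect to the basic open sets. So I would start with a categorical open cover $\{U_0,\dots,U_k\}$ of $X$ realizing $\cat X = k$, where each inclusion $U_i \hookrightarrow X$ is homotopic to a constant map $c_{x_i}$.

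First I would check that $\K$ sends each open subset $U_i \subset X$ to a subcomplex $\K(U_i) \subset \K(X)$: since a chain in $U_i$ is in particular a chain in $X$, the simplices of $\K(U_i)$ are simplices of $\K(X)$, and because $U_i$ is open (a downset) any face of such a chain stays in $U_i$, so $\K(U_i)$ is genuinely a subcomplex. Next I would verify these subcomplexes cover $\K(X)$: every simplex of $\K(X)$ is a chain $\sigma$ in $X$, whose elements all lie in some $U_i$ (since the $U_i$ cover $X$ and each is a downset, the chain is contained in $U_i$ whenever its maximum is), hence $\sigma \in \K(U_i)$.

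The heart of the argument is showing that each $\K(U_i)$ is categorical in $\K(X)$. The inclusion $\iota_i\colon U_i \hookrightarrow X$ is homotopic to the constant map $c_{x_i}\colon U_i \to X$. Applying $\K$ and using \propref{BARMAK212}(1), the simplicial maps $\K(\iota_i)$ and $\K(c_{x_i})$ lie in the same contiguity class. Now $\K(\iota_i)$ is precisely the inclusion $\K(U_i) \hookrightarrow \K(X)$ of subcomplexes, and $\K(c_{x_i})$ is the constant simplicial map at the vertex $x_i$. Therefore the inclusion of $\K(U_i)$ into $\K(X)$ is in the contiguity class of a constant map, which is exactly the condition that $\K(U_i)$ be a categorical subcomplex in the sense of our definition.

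Assembling these pieces, $\{\K(U_0),\dots,\K(U_k)\}$ is a cover of $\K(X)$ by $k+1$ categorical subcomplexes, so $\scat \K(X) \leq k = \cat X$. The main obstacle I anticipate is the bookkeeping in the covering step: one must be careful that openness of $U_i$ (equivalently, that it is a downset $U_i = \bigcup_{x \in U_i} U_x$) is what guarantees both that $\K(U_i)$ is closed under taking faces and that an arbitrary chain of $X$ actually lands inside one of the $\K(U_i)$ rather than being split across several. Once openness is used correctly, the categoricity step is a direct application of the functoriality of $\K$ together with \propref{BARMAK212}(1), so no further subtlety arises there.
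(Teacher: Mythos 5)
Your proof is correct and follows essentially the same route as the paper: apply $\K$ to a categorical open cover of $X$ and use \propref{BARMAK212}(1) to conclude that each $\K(U_i)$ is a categorical subcomplex. In fact you supply a detail the paper glosses over, namely that openness of $U_i$ (the downset property) is exactly what guarantees every chain of $X$ lies entirely inside some $U_i$, so the $\K(U_i)$ really do cover $\K(X)$.
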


\begin{proof}Let $U_0,\dots,U_n$ be a categorical cover of $X$. Then the associated simplicial complexes $\K(U_k)$,
$1\leq k\leq n$, cover $\K(X)$. By definition of LS-category of a
topological space (\defiref{LSCAT}), each inclusion $i_k\colon
U_k\subset X$ is homotopic to some constant map $c_k\colon U_i \to
X$, that is, $i_k\simeq c_k$. Then, by \thmref{BARMAK212}, the
simplicial maps $\K(i_k)$ and $K(c_k)$ from $\K(U_k)$ into $\K(X)$
are in the same contiguity class. Clearly $\K(i_k)$ is the
inclusion $\K(U_k)\subset \K(X)$, and $\K(c_k)$ is a constant map.
Thus, by definition of LS-category of a simplicial complex
(\defiref{CATSC}) the {family of} complexes $\K(U_k)$ {
forms} a categorical cover of $\K(X)$ and {thus}  $\scat
\K(X)\leq n$.
\end{proof}

A completely analogous proof gives {the following inequality for the corresponding geometric categories.}

\begin{prop}\label{GSCATX}  $\gscat \K(X)\leq \gcat X$.
\end{prop}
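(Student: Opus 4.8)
The proposition states that $\gscat \K(X) \leq \gcat X$, and the paper remarks that the proof is "completely analogous" to that of \propref{AAA}. The plan is to mirror the structure of that preceding proof, replacing the notion of a categorical cover with one by subsets that are contractible in themselves, and correspondingly replacing contiguity-to-a-constant-map with the identity being in the contiguity class of a constant map.

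First I would take a cover $U_0,\dots,U_n$ of $X$ realizing $\gcat X = n$, where each $U_i$ is open and contractible \emph{in itself}. The passage to order complexes gives subcomplexes $\K(U_0),\dots,\K(U_n)$, and since each simplex of $\K(X)$ is a chain whose vertex set lies in some $U_i$ (because the $U_i$ cover $X$ and an open set is a union of basic sets $U_x$), these subcomplexes cover $\K(X)$. The key step is then to show that each $\K(U_i)$ is strongly collapsible, i.e. that $\id_{\K(U_i)}$ lies in the contiguity class of a constant map. Here I would invoke \thmref{BARMAK212}(1): contractibility of $U_i$ in itself means $\id_{U_i} \simeq c_i$ for some constant map $c_i \colon U_i \to U_i$, so the induced simplicial maps $\K(\id_{U_i}) = \id_{\K(U_i)}$ and $\K(c_i)$ lie in the same contiguity class. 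Since $\K(c_i)$ is plainly a constant simplicial map, this exhibits $\K(U_i)$ as strongly collapsible.

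The main subtlety — and the point where one must be slightly more careful than in \propref{AAA} — is that \thmref{BARMAK212}(1) applies to maps between $T_0$-spaces, and the contraction must be taken \emph{within} $U_i$ rather than within $X$. In \propref{AAA} the inclusion $i_k \colon U_k \subset X$ is contracted inside the ambient space, so one applies $\K$ to maps with codomain $X$; here instead the relevant homotopy $\id_{U_i} \simeq c_i$ has both domain and codomain $U_i$, so I apply the functor $\K$ to maps $U_i \to U_i$ and obtain contiguous maps $\K(U_i) \to \K(U_i)$. One should check that $U_i$, as an open subset of the finite $T_0$-space $X$, is itself a finite $T_0$-space with the subspace order, so that \propref{HOMOTOPY} and \thmref{BARMAK212} apply verbatim to it; this is routine since subspaces of $T_0$-spaces are $T_0$ and the subspace topology corresponds to the restricted order.

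Having established that $\{\K(U_0),\dots,\K(U_n)\}$ is a cover of $\K(X)$ by strongly collapsible subcomplexes, the definition of $\gscat$ yields $\gscat \K(X) \leq n = \gcat X$, completing the argument. I do not anticipate a genuine obstacle here, as the whole proof is a faithful translation of \propref{AAA} with "categorical" replaced by "strongly collapsible"; the only care needed is the bookkeeping of domains and codomains described above when applying \thmref{BARMAK212}.
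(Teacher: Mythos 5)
Your proof is correct and is exactly what the paper intends: the paper gives no separate argument for this proposition, stating only that it follows by a proof ``completely analogous'' to that of \propref{AAA}, and your writeup carries out precisely that analogy, with the right (and only) adjustment — applying $\K$ and \propref{BARMAK212}(1) to the self-maps $\id_{U_i}\simeq c_i$ of each $U_i$ so that each $\K(U_i)$ is strongly collapsible rather than merely categorical.
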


\begin{example}
Let us consider {(\figref{FigureC})} the order complex $\K(X)$ of the finite space $X$ of \examref{X-cat-gcat}.

\begin{figure}[htbp]
\begin{center}
\includegraphics[width=0.5\textwidth]{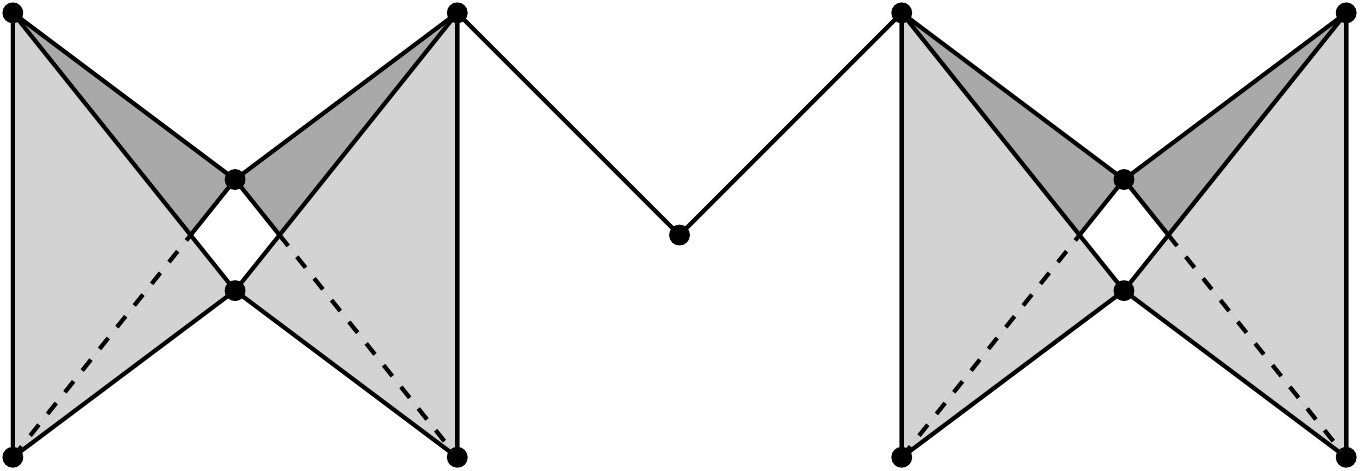}
\caption{The order complex $\K(X)$ of the space $X$ given in \figref{FigureB}.}
\label{FigureC}
\end{center}
\end{figure}

  {Since $\K(X)$ is not strongly collapsible, $\gscat \K(X)\geq
1$. {In addition},   the   {two} strongly collapsible
subcomplexes given in \figref{Figure3132} cover $\K(X)$. So we
conclude that $\gscat \K(X)=1$.

\begin{figure}[htbp]
\begin{center}
\includegraphics[width=0.4\textwidth]{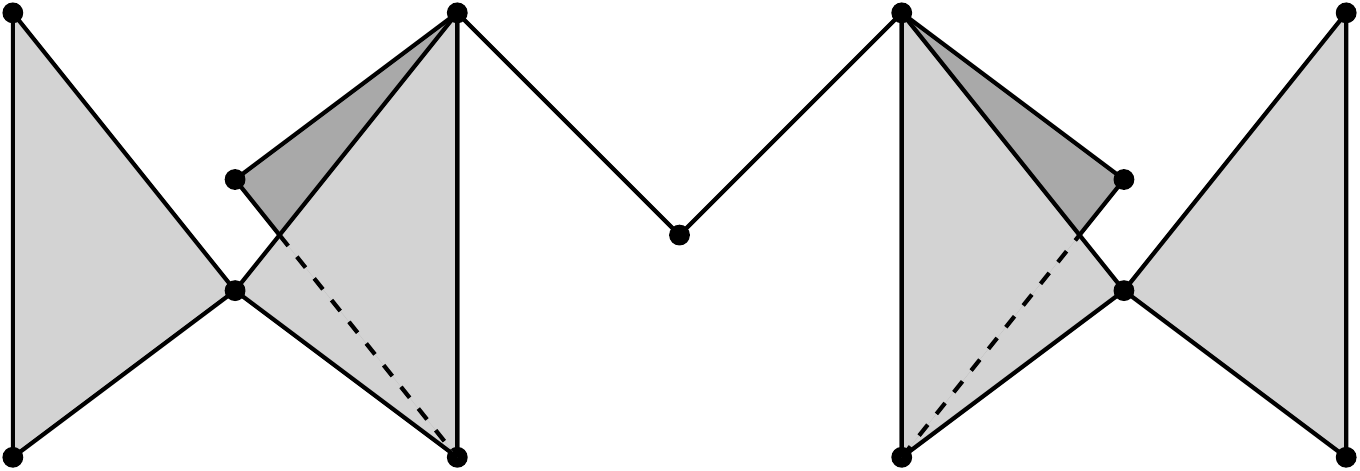}\hspace*{1.5cm}\includegraphics[width=0.4\textwidth]{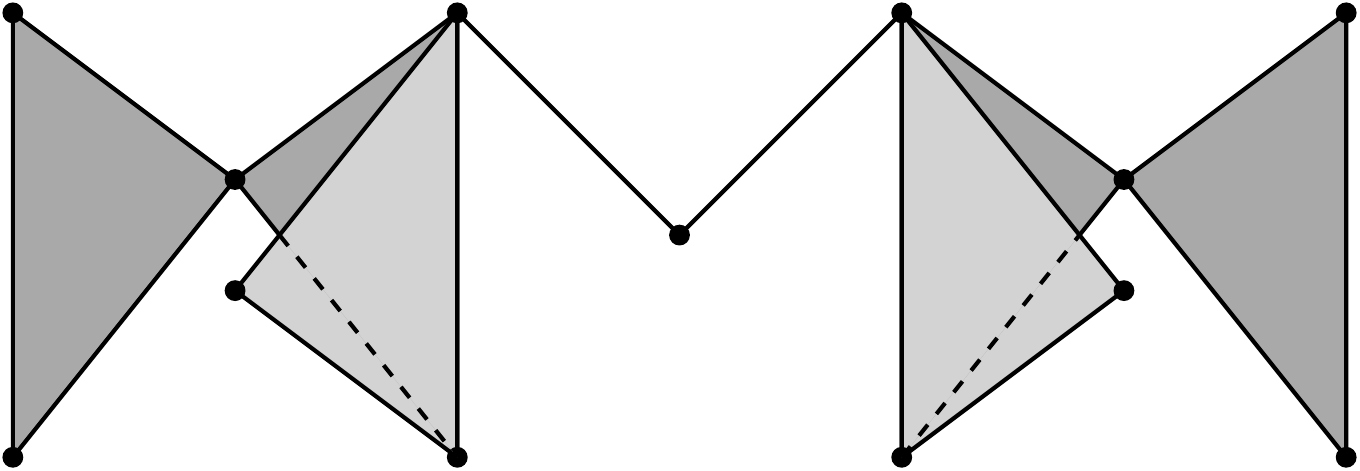}
\caption{Two strongly collapsible subcomplexes}
\label{Figure3132}
\end{center}
\end{figure}

It is interesting to point out that the inequality of  \propref{GSCATX} is strict for this example.
However, the upper bound of the Proposition~\ref{AAA} is {attained} since $\scat \K(X)=\cat X=1$.}
\end{example}

{Now we shall prove analogous inequalities relating the simplicial category of a finite complex $K$ and the topological category of  the face poset  $\chi(K)$.}

\begin{prop}\label{BBB} Let $K$ be a simplicial complex and $\chi(K)$ its Hasse diagram. Then $\cat \chi(K)\leq \scat K$.
\end{prop}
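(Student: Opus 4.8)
The plan is to mirror the structure of the proof of Proposition~\ref{AAA}, but running the functor $\chi$ instead of $\K$ and using Proposition~\ref{BARMAK213} in place of Proposition~\ref{BARMAK212}. We start from a categorical cover of $K$ witnessing $\scat K = n$, push it through $\chi$ to obtain a cover of the face poset $\chi(K)$, and check that each resulting piece is a categorical open subset in the topological sense.

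First I would fix a categorical cover $U_0,\dots,U_n$ of $K$ by subcomplexes, so that for each $k$ the inclusion $i_k\colon U_k \to K$ satisfies $i_k \ctgcl c_v$ for some vertex $v$. Applying $\chi$ gives continuous maps $\chi(i_k)\colon \chi(U_k)\to \chi(K)$, and by part~(1) of Proposition~\ref{BARMAK213} the relation $i_k \ctgcl c_v$ forces $\chi(i_k) \simeq \chi(c_v)$; since $\chi(c_v)$ is a constant map, this says $\chi(U_k)$ is contractible inside $\chi(K)$, i.e. categorical. Here $\chi(U_k)$ is the subposet of $\chi(K)$ consisting of the simplices of $K$ that lie in $U_k$, and $\chi(i_k)$ is the inclusion of this subposet. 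Because the $U_k$ cover $K$ (every simplex of $K$ belongs to some $U_k$), the subposets $\chi(U_k)$ cover $\chi(K)$. If each $\chi(U_k)$ is an \emph{open} categorical subset, Definition~\ref{DEFLSCAT} then yields $\cat \chi(K)\leq n = \scat K$.

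The main obstacle, and the place where this proof genuinely differs from that of Proposition~\ref{AAA}, is verifying that $\chi(U_k)$ is \emph{open} in the Alexandrov topology on $\chi(K)$. A subset of a finite poset is open precisely when it is downward closed: if $\tau \in \chi(U_k)$ and $\sigma \leq \tau$ (that is, $\sigma$ is a face of $\tau$), we must have $\sigma \in \chi(U_k)$. This is exactly the condition that $U_k$ be a genuine \emph{subcomplex} of $K$ rather than an arbitrary collection of simplices: a subcomplex is closed under taking faces, so $\tau \in U_k$ and $\sigma \subset \tau$ give $\sigma \in U_k$, hence $\sigma \in \chi(U_k)$. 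Thus the subcomplex hypothesis built into the definition of $\scat K$ is precisely what guarantees openness of $\chi(U_k)$, and the verification reduces to this one observation.

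Putting these pieces together, the $\chi(U_k)$ form an open cover of $\chi(K)$ by categorical subsets, of cardinality $n+1$, which is exactly what is needed to bound the LS-category of the face poset. I would expect the write-up to be short: one invocation of Proposition~\ref{BARMAK213}(1) to transport the contiguity witness to a homotopy, and one line identifying downward-closed subposets with subcomplexes to secure openness, after which Definition~\ref{DEFLSCAT} gives the conclusion $\cat \chi(K)\leq \scat K$.
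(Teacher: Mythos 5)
Your proof is correct and follows essentially the same route as the paper's: take a categorical cover of $K$ by subcomplexes, apply $\chi$, and use Proposition~\ref{BARMAK213}(1) to convert the contiguity class $i_k \ctgcl c_v$ into a homotopy $\chi(i_k)\simeq \chi(c_v)$ with constant target, yielding a categorical cover of $\chi(K)$. Your explicit verification that each $\chi(U_k)$ is \emph{open} in the Alexandrov topology (subcomplexes are face-closed, hence downward-closed in the face poset) is a detail the paper leaves implicit, and it is a correct and worthwhile observation.
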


\begin{proof} Let $K_0,\dots,K_n$ be a cover of $K$ by subcomplexes such that each inclusion $i_k\colon K_k\subset K$
is in the same contiguity class of some constant map $c_k\colon
K_k \to K$. Then, {using \propref{BARMAK213}}, the continuous
maps $\chi(i_k)$ and $\chi(c_k)$ are homotopic. By definition
(Section~\ref{ASSOC}), the first one is the inclusion
$\chi(K_k)\subset \chi(K)$, and the second one is a constant map.
Then $\chi(K_0),\dots,\chi(K_n)$ is a categorical cover of
$\chi(K)$. Thus $\cat \chi(K) \leq n$.
\end{proof}

A completely analogous proof gives {the corresponding result for geometric categories.}
\begin{prop}  $\gcat \chi(K)\leq \gscat K$.
\end{prop}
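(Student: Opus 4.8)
The plan is to mirror the proof of \propref{BBB}, replacing categorical subcomplexes by strongly collapsible ones and $\cat$ by $\gcat$ throughout. First I would start with a cover $K_0,\dots,K_n$ of $K$ by strongly collapsible subcomplexes, where $n=\gscat K$. By the definition of strong collapsibility recalled at the beginning of Section~\ref{LSCATSC}, for each index $k$ the identity $1_{K_k}\colon K_k\to K_k$ lies in the same contiguity class as some constant map $c_k\colon K_k\to K_k$; that is, $1_{K_k}\ctgcl c_k$.

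Next I would apply the functor $\chi$ together with \propref{BARMAK213}(1): since $1_{K_k}\ctgcl c_k$, the induced continuous maps $\chi(1_{K_k})$ and $\chi(c_k)$ are homotopic. But $\chi(1_{K_k})=1_{\chi(K_k)}$ is the identity of $\chi(K_k)$, while $\chi(c_k)$ is a constant map, since $c_k$ sends every simplex of $K_k$ to the single vertex $v$ in its image and hence $\chi(c_k)(\sigma)=c_k(\sigma)=\{v\}$ for every $\sigma$. Thus the identity of $\chi(K_k)$ is homotopic to a constant self-map, which is precisely the statement that $\chi(K_k)$ is contractible in itself.

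Finally I would verify that the family $\{\chi(K_0),\dots,\chi(K_n)\}$ is an admissible cover for computing $\gcat$. They cover $\chi(K)$ because the $K_k$ cover $K$, and each $\chi(K_k)$ is open in $\chi(K)$: a subcomplex is closed under passing to faces, so $\chi(K_k)$ is downward closed in the face poset and therefore open in the associated Alexandrov topology. Since each $\chi(K_k)$ is also contractible in itself, these open sets form a cover of $\chi(K)$ by subsets contractible in themselves, yielding $\gcat\chi(K)\le n=\gscat K$.

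I expect no serious obstacle, the argument being an almost verbatim transcription of the proof of \propref{BBB} with the identity in place of the inclusion. The only point demanding a moment's care is checking that each $\chi(K_k)$ is genuinely \emph{open} in $\chi(K)$ rather than merely contractible, since both $\gcat$ and $\cat$ require open covers; this follows from the downward-closedness noted above, while the crucial homotopy statement is delivered directly by \propref{BARMAK213}.
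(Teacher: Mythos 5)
Your proof is correct and is essentially the paper's own argument: the paper simply states that ``a completely analogous proof'' to that of \propref{BBB} works, and your transcription (strongly collapsible subcomplexes in place of categorical ones, the identity in place of the inclusion, and \propref{BARMAK213} converting contiguity classes to homotopies) is exactly that analogue. Your explicit verification that each $\chi(K_k)$ is open in the Alexandrov topology, being a down-set of the face poset, is a detail the paper leaves implicit but is a welcome addition.
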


The next {Corollary is a direct reformulation} in categorical
terms of original results due to J. Barmak (Corollary 5.2.8 of
\cite{BARMAK}).

{\begin{cor}\label{515}{\ }
\begin{enumerate}
\item
$\cat X=0$ if and only if $\scat \K(X)=0$. In other words, $X$
is contractible if and only if its order complex $\K(X)$ is
strongly collapsible.

%

\item
$\scat K=0$ if and only if $\cat \chi(K)=0$, that is the
complex $K$ is strongly collapsible if and only if its order poset
$\chi(K)$ is contractible.
\end{enumerate}
\end{cor}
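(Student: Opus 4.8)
The plan is to translate each vanishing-category condition into a (strong) contractibility statement and then recognise the two resulting equivalences as the cited result of Barmak. First I would unwind $\cat X=0$: by \defiref{DEFLSCAT} this means $X$ admits a cover by a single categorical open set, which can only be $X$ itself; since the inclusion $X\subset X$ is the identity, categoricity says $\id_X$ is homotopic to a constant, i.e.\ $X$ is contractible. In exactly the same way, $\scat K=0$ forces the unique member of the cover to be $K$, whose inclusion is $1_K$, so by \defiref{CATSC} the condition is that $1_K$ lies in the contiguity class of a constant map---precisely strong collapsibility of $K$, as already observed just after \defiref{CATSC}. Thus statement (1) becomes ``$X$ is contractible if and only if $\K(X)$ is strongly collapsible'' and statement (2) becomes ``$K$ is strongly collapsible if and only if $\chi(K)$ is contractible''.

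The forward implications are then immediate from the functorial propositions. For (1), if $X$ is contractible then $X\simeq\ast$, so by \propref{BARMAK212} the complexes $\K(X)$ and $\K(\ast)$ have the same strong homotopy type; but $\K(\ast)$ is a single point, so $\K(X)$ is strongly collapsible. For (2), if $K\sim\ast$ then by \propref{BARMAK213} the spaces $\chi(K)$ and $\chi(\ast)$ are homotopy equivalent, and $\chi(\ast)$ is a single point, so $\chi(K)$ is contractible. Both arguments merely apply the quoted propositions with the one-point space resp.\ complex substituted for $Y$ resp.\ $L$.

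The hard part will be the converse implications, and here I do not expect the quoted propositions alone to suffice. Applying \propref{BARMAK213} to a strongly collapsible $\K(X)$ only yields that $\chi(\K(X))$ is contractible, and $\chi(\K(X))$ is the order-theoretic barycentric subdivision of the poset $X$ rather than $X$ itself; dually, applying \propref{BARMAK212} to a contractible $\chi(K)$ only gives strong collapsibility of $\K(\chi(K))=\sd K$. Since the reciprocals of \propref{BARMAK212} and \propref{BARMAK213} fail in general (as noted right after \propref{BARMAK212}), closing this gap requires Barmak's finer analysis relating beat points of $X$ to dominated vertices of $\K(X)$ across the subdivision, showing that strong collapsibility and contractibility are \emph{reflected}, and not merely preserved, by the passage between $X$ and $\K(X)$ (resp.\ $K$ and $\chi(K)$). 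This is exactly the content of \cite[Cor.~5.2.8]{BARMAK}, so for the converses I would invoke that result directly; the whole Corollary is then just its restatement in the language of $\cat$ and $\scat$.
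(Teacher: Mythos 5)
Your proposal is correct and takes essentially the same route as the paper: the paper gives no independent argument, stating only that the Corollary is a direct reformulation, in terms of $\cat$ and $\scat$, of Barmak's Corollary 5.2.8 of \cite{BARMAK}, which is precisely the citation you invoke for the converse directions. Your explicit unwinding of $\cat X=0$ and $\scat K=0$ into contractibility and strong collapsibility, and your (accurate) observation that \propref{BARMAK212} and \propref{BARMAK213} yield only the forward implications while the converses need Barmak's finer result, simply make the paper's one-line justification precise.
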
}
Finally, we compare the simplicial category of a complex and of
its first {barycentric} subdivision.
\begin{cor} If $K$ is a simplicial complex, then the category of its first barycentric subdivision satisfies $\scat \sd(K) \leq \scat K$.
\end{cor}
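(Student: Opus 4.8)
The plan is to obtain the inequality as a formal consequence of the two comparison results already proved, namely \propref{AAA} and \propref{BBB}, by routing through the identity $\K(\chi(K))=\sd K$ that was recorded in the Introduction. The key structural observation is that the first barycentric subdivision factors as the composite of the two functors relating complexes and posets: passing from $K$ to its face poset $\chi(K)$ and then taking the order complex $\K(\chi(K))$ recovers exactly $\sd K$. This lets me insert $\cat \chi(K)$ as an intermediate quantity sandwiched between $\scat \sd K$ and $\scat K$.

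First I would apply \propref{AAA} to the finite poset $X=\chi(K)$. That proposition states $\scat \K(X)\leq \cat X$ for any finite poset $X$, so specializing to $X=\chi(K)$ gives
\[
\scat \K(\chi(K))\leq \cat \chi(K).
\]
Invoking the identity $\K(\chi(K))=\sd K$, the left-hand side is precisely $\scat \sd K$, so this reads $\scat \sd K\leq \cat \chi(K)$.

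Next I would apply \propref{BBB} directly to $K$, which asserts $\cat \chi(K)\leq \scat K$. Chaining the two inequalities produces
\[
\scat \sd K\leq \cat \chi(K)\leq \scat K,
\]
and the outer inequality is exactly the claimed bound $\scat \sd K\leq \scat K$.

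I do not expect any genuine obstacle here: once the factorization $\sd=\K\circ\chi$ is recognized, the statement is a one-line composition of the two previously established propositions, requiring no new simplicial or combinatorial argument. The only point warranting a word of care is the bookkeeping of which functor decreases category in which direction, i.e.\ checking that the inequalities from \propref{AAA} and \propref{BBB} point the same way so that they compose; since both are stated as ``$\leq$'' in the appropriate variables, they align and the chain closes without sign trouble.
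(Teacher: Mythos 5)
Your proposal is correct and is essentially identical to the paper's own proof: both use the identity $\K(\chi(K))=\sd K$ and chain Propositions~\ref{AAA} and~\ref{BBB} to get $\scat \sd K \leq \cat\chi(K) \leq \scat K$. No differences worth noting.
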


\begin{proof} Since $K^\prime=\K(\chi(K))$ equals $\sd(K)$, it follows from Propositions \ref{AAA} and \ref{BBB} that
$$\scat K^\prime \leq  \cat \chi(K) \leq \scat K.\eqno\qedhere$$
\end{proof}

Notice that a complex $K$ and its barycentric subdivision $\sd(K)$
may not have the same strong homotopy type. For instance
\cite[Example 5.1.13]{BARMAK} if $K$ is the boundary of a
$2$-simplex then {both complexes $K$ and $\sd(K)$} do not have
beat points. Then if they were in the same homotopy class they
would be isomorphic, by Stong's result \cite[Th.~3]{STONG}. But
obviously they are not. {However, as pointed out in the proof of
\corref{515},  a complex $K$ is strong collapsible if and only if
its barycentric subdivsion $\sd(K)$ is strong collapsible. }

\section*{Acknowledgements}
The authors would like to thank Jonathan Barmak, Gabriel Minian,
John Oprea, Antonio Quintero and Daniel Tanr\'e for their valuable
comments and suggestions.

\vspace{3cm}

\address{
\noindent {\sc Desamparados Fern\'andez-Ternero}.
\\Dpto. de Geometr\'{\i}a y Topolog\'{\i}a, Universidad de Sevilla, Spain.\\}
\email{desamfer@us.es}

\medskip

\address{
\noindent {\sc Enrique Mac\'ias-Virg\'os}.
\\{Dpto. de Geometr\'{\i}a y Topolog\'{\i}a,} Universidade de San\-tia\-go de Compostela, Spain.\\}
\email{quique.macias@usc.es}

\medskip

\address{
\noindent {\sc
Jos\'e Antonio Vilches Alarc\'on}.
\\Dpto. de Geometr\'{\i}a y Topolog\'{\i}a, Universidad de Sevilla, Spain.\\}
\email{vilches@us.es}


\begin{thebibliography}{99}

\bibitem{SCOVILLE}  Aaronson, S.; Scoville, N.A.
 Lusternik-Schnirelmann category for simplicial complexes.
{\em Illinois Journal of Mathematics},
 {\bf 57}, {3},
743--753 (2013).

\bibitem{BARMAK} Barmak, J.A. {\em Algebraic topology of finite topological spaces and applications}. Berlin: Springer (2011).

\bibitem{BARMAKMINIAN} Barmak, J.A.; Minian, E.G. Strong homotopy types, nerves and collapses. {\em Discrete Comput. Geom.}  {\bf 47} {2}, {301--328} ({2012}).

\bibitem{CM} Clapp, M.; Montejano, L. Lusternik-Schnirelmann category and minimal coverings with contractible sets. {\em Manuscripta Math.} {\bf 58} (1987), no. 1-2, 37--45.

\bibitem{CORNEA} Cornea, O.;  Lupton; G., Oprea, J.; Tanr\'e, D.
 {\em Lusternik-Schnirelmann category}. Providence, RI: American Mathematical Society (AMS) ({2003}).

{\bibitem{BARMAKMINIAN2} Barmak, J.A.; Minian, E.G.
Simple homotopy types and finite spaces.
{\em Adv. Math.} {\bf 218} No. 1, 87--104 (2008).}

\bibitem{COHEN} Cohen, M.M. A course in simple-homotopy theory. {\em A course in simple-homotopy theory.}
Graduate Texts in Mathematics {\bf 10}. New York-Heidelberg-Berlin: Springer-Verlag  (1973).

\bibitem{FARBERGHRIST}  Farber, M.; R. Ghrist, R.; Burger, M.;  Koditschek, D. (eds.). {\em Topology and robotics}. Results of the workshop, FIM, ETH Zurich, Switzerland, July 10--14, 2006. Providence, RI: American Mathematical Society (AMS) (2007).

\bibitem{FOX} Fox, R.H. On the Lusternik-Schnirelmann category. {\em Ann. of Math.} {\bf 42}, 333--370 (1941).

 \bibitem{JAMES} James, I.M. On category, in the sense of Lusternik-Schnirelmann. {\em {Topology}} {\bf 17},
 {331--348} ({1978}).

{\bibitem{KOPPERMAN} Kopperman, R. Topological digital topology. In {\em Discrete geometry for computer imagery.}
 {{11th international conference, DGCI 2003, Naples, Italy, November 19--21, 2003. Proceedings}}, Berlin: Springer
 {1--15}
({2003}).}

\bibitem{MAY}May, J.P. Finite topological spaces. Notes University of Chicago ().\url{http://www.osti.gov/eprints/topicpages/documents/record/726/2811342.html}

 \bibitem{SPANIER}Spanier, E.H., {\em Algebraic Topology}. McGraw-Hill Series in Higher Mathematics. New York  (1966).

\bibitem{STONG} Stong, R.E. Finite topological spaces. {\em Trans.   Amer. Math. Soc.} {\bf 123}, 325--340 ({1966}).

\bibitem{WACHS} Wachs, M.L. {Poset Topology: tools and applications}. In {\em Geometric combinatorics} {497--615}, Providence, RI: American Mathematical Society (AMS); Princeton, NJ: Institute for Advanced Studies
({2007}).

 \bibitem{WOFSEY}
Wofsey, E. On the algebraic topology of finite spaces (2008) \url{http://www.math.harvard.edu/~waffle/finitespaces.pdf}.

\end{thebibliography}
\end{document}